\numberwithin{equation}{section}
\newtheorem{thm}{Theorem}[section]
\newtheorem{prop}[thm]{Proposition}
\newtheorem{question}[thm]{Question}
\newtheorem{cor}[thm]{Corollary}
\newtheorem{defin}[thm]{Definition}
\newtheorem{lemma}[thm]{Lemma}
\newcommand{\lk}{\mathfrak{k}}
\newcommand{\p}{\mathfrak{p}}
\newcommand{\g}{\mathfrak{g}}
\newcommand{\gc}{\g ^\mathbb C}
\newcommand{\lu}{\mathfrak u}
\newcommand{\q}{\mathfrak q}
\newcommand{\N}{\mathfrak N}
\newcommand{\actingon}{\circlearrowright}
\newcommand{\GC}{G^\mathbb C}
\newcommand{\GCv}{\GC \cdot v}
\newcommand{\Gv}{G \cdot v}
\newcommand{\VC}{V^\mathbb C}
\newcommand{\rpv}{\mathbb{P}V}
\newcommand{\rpvc}{\mathbb{RP}V^\mathbb C}
\newcommand{\cpvc}{\mathbb{CP}(V^\mathbb C)}
\newcommand{\nms}{||m||^2}
\newcommand{\nns}{||n||^2}
\newcommand{\nmus}{||\mu^*||^2}
\newcommand{\restrictto}[2]{\left. #1 \right|_{#2}}
\newcommand{\ddtat}{\restrictto{\frac{d}{dt}}{t=0}}
\begin{document}

\title{Distinguished Orbits of Reductive Groups}
\author{M. Jablonski}
\date{}
\maketitle

\begin{abstract}
We prove a generalization and give a new proof of a theorem of
Borel-Harish-Chandra on closed orbits of linear actions of reductive groups.
 Consider a real reductive algebraic
group $G$ acting linearly and rationally on a real vector space $V$. $G$ can be viewed as the real
points of a complex reductive group $G^\mathbb C$  which acts on
$V^\mathbb C := V\otimes \mathbb C$. In \cite{BHC} it was shown that
$G^\mathbb C \cdot v \cap V$ is a finite union of $G$-orbits; moreover,
$G^\mathbb C \cdot v$ is closed if and only if $G\cdot v$ is closed, see  \cite{RichSlow}.
We show that the same result holds not just for closed orbits but for
the so-called distinguished orbits.  An orbit is called distinguished if it contains a critical point of the norm squared of the moment map on projective space.  Our main result compares the complex and real settings to show $G\cdot v$ is distinguished
if and only if $G^\mathbb C \cdot v$ is distinguished.

In addition, we show that if an orbit is distinguished, then under the negative gradient flow of the norm squared of the moment map the entire $G$-orbit collapses to a single $K$-orbit.  This result holds in both the complex and real settings.

We finish with some applications  to the study of the left-invariant geometry of Lie groups.
\end{abstract}

\section{Introduction}
An analytical approach to finding closed orbits in the complex
setting was developed by Kempf-Ness \cite{Kempf-Ness} and extended to the
real setting by Richardson-Slodowy \cite{RichSlow}.  From their
perspective, the closed orbits are those that contain the zeros of
the so-called moment map. However, one can consider more generally
critical points of this moment map on projective space.  Work on the moment map in the complex setting has been done by Ness \cite{Ness} and Kirwan \cite{Kirwan}.  Following those works, the real moment map was explored in \cite{Marian} and \cite{EberleinJablo}.

Consider a real linear reductive group $G$ acting linearly and rationally on a real vector space $V$.  There is a complex linear reductive group $\GC$ such that $G$ is a finite index subgroup of the real points of $G^\mathbb C$; moreover, $\GC$ acts on the complexifcation $\VC$ of $V$.  The linear action of $G$, respectively $\GC$, extends to an action on real projective space $\rpv$, respectively complex projective space $\cpvc$.  For $v\in V$, we call an
orbit $G\cdot v$, or $G\cdot [v]$, \textit{distinguished} if the orbit $G\cdot [v]$ in real projective space contains a critical point of $\nms$, the norm square of the real moment map.  Similarly, for $v\in \VC$, we call an orbit $\GC \cdot v$, or $G\cdot \pi [v]$, \textit{distinguished} if the orbit $\GC \cdot \pi [v]$ in complex projective space contains a critical point of $\nmus$, the norm square of the complex moment map.  Here $\pi : \rpvc \to \cpvc$ is the natural projection. Our main theorems are\\

\textbf{Theorem \ref{distinguished orbits thm}}\textit{
Given $G \actingon V$, $\GC \actingon \VC$, and $[v]\in \rpv$ we have
    \begin{quote} $G\cdot [v]$ is a distinguished orbit in $\rpv$ if
    and only if $\GC \cdot \pi [v]$ is a distinguished orbit in $\cpvc$.
    \end{quote} Here $\pi : \rpv \subseteq \rpvc  \to \cpvc$ is the usual projection.}\\

\textbf{Theorem \ref{complex gradient flow thm}}\textit{ For $x\in \cpvc$, suppose $\GC \cdot x \subseteq \cpvc$ contains a critical point of $\nmus$.  If $z \in \mathfrak C \subseteq \cpvc$ is such a critical point, then $\mathfrak C \cap \GC \cdot x = U\cdot z$.
Moreover, $U\cdot z = \displaystyle{\bigcup_{g\in G^\mathbb C} } \omega (gx)$.}\\

\textbf{Theorem \ref{thm: real gradient flow thm}}\textit{ For $x\in \rpv$, suppose $G\cdot x \subseteq \rpv$ contains a critical point of $\nms$.  If $z\in \mathfrak C_\mathbb R \subseteq \rpv$ is such a critical point, then $\mathfrak C_\mathbb R \cap G\cdot x = K\cdot z$.  Moreover, $K\cdot z = \displaystyle \bigcup_{g\in G} \omega (gx)$.}\\

Here $\mu^*$ is the moment map for the action of $G^\mathbb C$ on $\cpvc$ and $\mathfrak C$ is the set of critical points of $||\mu^*||^2$ in $\cpvc$, while $m$ is the moment map for the action of $G$ on $\rpv$ and $\mathfrak C_\mathbb R$ is the set of critical points of $\nms$ in $\rpv$.  The fact that $\mathfrak C \cap G^\mathbb C\cdot x =U\cdot z$ was proven in \cite{Ness} in the complex setting; the fact that $\mathfrak C_\mathbb R \cap G\cdot x = K\cdot z$ was proven in \cite{Marian} in the real setting.  The fact that the orbit collapses under the negative gradient flow of $||\mu^*|||^2$, respectively $\nms$, to a single $U$-orbit, respectively $K$-orbit, is our new contribution (see Definition \ref{def: omega limit set} for the definition of the $\omega$-limit set).

The value of Theorem \ref{distinguished orbits thm} is as follows.  Since $G^\mathbb C \cdot v
\cap V$ is a finite union of $G$-orbits, if we can show that one of
these $G$-orbits is distinguished then all of them are.  This has
been applied to the problem of finding generic 2-step nilpotent Lie
groups which admit soliton metrics.  See
\cite{Jablo:Thesis} for more information on the
soliton problem.

These theorems are then applied in Section 6 to questions regarding the left-invariant geometry of nilpotent and solvable Lie groups.\\

\textit{Acknowledgements.}  This note is a portion of my thesis work completed under the direction of Pat Eberlein at the University of North Carolina, Chapel Hill.  I am grateful to him for suggesting many improvements to the first version of this paper.


\section{Notation and Technical Preliminaries}
Our goal is to study closed reductive subgroups $G$ of $GL(E)$ which are more or
less algebraic.  Here $E$ is a real vector space and we denote its complexification by $E^\mathbb C=E\otimes \mathbb C$.  We call a subgroup $H $ of $GL(E)$ a \textit{real algebraic group} if $H$ is the zero set of polynomials on $GL(E)$ with real coefficients; that is, polynomials in $\mathbb R [ GL(E)]$.

Consider a closed subgroup $H\subseteq GL(E)$ with finitely many connected components and its Lie algebra $\mathfrak h \subseteq \mathfrak {gl}(E)$.  Let $\mathfrak z$ denote the center of $\mathfrak h$.  We say that $H$, or $\mathfrak h =L(H)$, is \textit{reductive} if $\mathfrak h = [\mathfrak h, \mathfrak h] \oplus \mathfrak z$, $[\mathfrak h,\mathfrak h]$ is semi-simple, and $\mathfrak z \subseteq \mathfrak{gl}(E)$ consists of semi-simple endomorphisms.  Reductive groups in this sense are precisely the groups that are completely reducible, see \cite[section 1.2]{BHC}.

We say that a group $G \subset GL(E)$ is a \textit{real linear reductive group} if $G$ is a finite index subgroup of a real algebraic reductive group $H$; that is, $G$ satisfies $H_0 \subseteq G \subseteq H$, where $H_0$ is the Hausdorff identity component of $H$.  For complex algebraic groups the Hausdorff and Zariski identity components coincide.  However, this need not be true for real algebraic groups.  It is well-known that there exists a complex (algebraic) reductive group $G^\mathbb C$ defined over $\mathbb R$ such that $G$ is Zariski dense in $\GC$ and is a finite index subgroup of the real points $\GC (\mathbb R) := \GC \cap GL(E)$ of $\GC$; that is, $\GC(\mathbb R) _0 \subseteq G\subseteq \GC (\mathbb R)$.  For completeness we construct this group.

Consider $G,H$ as above.  The ideal of polynomials that describe $H \subseteq GL(E)$  also describe a variety $\overline H \subseteq GL(E^\mathbb C)$ which is defined over $\mathbb R$.  This variety $\overline H$ is the Zariski closure of $H$ in $GL(E^\mathbb C)$.  As $H$ is a subgroup of $GL(E^\mathbb C)$, it follows that $\overline H$ is actually a complex algebraic subgroup of $GL(E^\mathbb C)$,  see \cite[I.2.1]{Borel:LinAlgGrps}.  Moreover, $\overline H$ is smooth and we have  $\dim _\mathbb C \overline H = \dim _\mathbb R H$, see \cite{Whitney}.  By comparing the dimensions of these groups and their tangent spaces at the identity, one sees $L(\overline H) = L(H)\otimes \mathbb C$.  Thus, $\overline H$ is reductive as $H$ is reductive.

To construct $\GC$ we consider $\overline H_0$, the Hausdorff identity component of $\overline H$.  Recall that $\overline H_0$ is an algebraic group as the Hausdorff and Zariski identity components of $\overline H$ coincide, see \cite[I.1]{Borel:LinAlgGrps}.  Define $\GC = \overline H_0 \cdot G$.  This is a subgroup of $\overline H$ as $\overline H_0$ is normal in $\overline H$; moreover, as $\overline H_0$ has finite index in $\overline H$ and $\GC$ contains $\overline H_0$, it follows that $\GC$ has finite index in $\overline H$.  Equivalently, we can write $G^\mathbb C = \bigcup_n (g_n\cdot \overline H_0)$ where $\{g_n\}\subset G$ is a finite collection.  Thus $\GC$ is an algebraic group as it can be described as a union of varieties.  Additionally, we observe that each component $g_n \cdot \overline H_0$ of $G^\mathbb C$ intersects $G$ and that $G^\mathbb C$ is the Zariski closure of $G$.  The importance of this observation will be made clear when extending certain inner products on real vector spaces to their complexifications; see  Proposition \ref{prop: extend inner product on V to VC}.

We call the group $\GC$ the \textit{complexification of $G$}. 
We choose the complex subgroup $G^\mathbb C$ instead of $\overline H$ as $\overline H$ might have topological components which do not intersect $G$.
This is the complexification used by Mostow, see \cite[section 2]{Mostow:SelfAdjointGroups}.  We point out that we are always working in the usual topology and will explicitly state when we are talking about Zariski closed sets.\\

Let $V$ be a real vector space and denote its complexification by $V^\mathbb C=V\otimes \mathbb C$.  We will consider representations $\rho:G\to GL(V)$ that are the restrictions of morphisms $\rho^\mathbb C : \GC \to GL(V^\mathbb C)$ of algebraic groups.  See \cite{Borel:LinAlgGrps} for more information on algebraic groups and morphisms between them. 
We will call such a representation a \textit{rational representation} of $G$.
Note: We will denote the induced Lie algebra representation by the same letter.

\subsection*{Cartan Involutions}

Let $E$ be a finite dimensional real vector space.  A \textit{Cartan involution} of $GL(E)$ is an involution of the form $\theta(g)=(g^t)^{-1}$, where $g^t$ denotes the metric adjoint with respect to some inner product on $E$.  At the Lie algebra level this involution is $\theta (X)= -X^t$.

\begin{prop}[Mostow \cite{Mostow:SelfAdjointGroups}] There exists a Cartan involution $\theta$ of $GL(E)$ such that $G^\mathbb C(\mathbb R)$ is $\theta$-stable. \end{prop}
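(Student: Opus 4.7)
The plan is to reduce this directly to Mostow's theorem on self-adjoint groups, applied not to $G$ itself but to the larger group $G^\mathbb C(\mathbb R) = G^\mathbb C \cap GL(E)$, which by construction is the object we need to stabilize. A Cartan involution of $GL(E)$ is determined by the choice of an inner product on $E$, so the task is to exhibit an inner product on $E$ with respect to which $G^\mathbb C(\mathbb R)$ is closed under transpose $g\mapsto g^t$. Since $G^\mathbb C(\mathbb R)$ is a group, closure under transpose is equivalent to $\theta$-stability.

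First I would verify that $G^\mathbb C(\mathbb R)$ is a real algebraic reductive subgroup of $GL(E)$. Because $G^\mathbb C \subseteq GL(E^\mathbb C)$ was constructed (via $\overline H_0 \cdot G$) as a complex algebraic subgroup defined over $\mathbb R$, the ideal cutting it out can be chosen with real coefficients, and intersecting its real zero set with $GL(E)$ realizes $G^\mathbb C(\mathbb R)$ as the real points of a complex reductive group. The Lie algebra of $G^\mathbb C(\mathbb R)$ is a real form of $L(G^\mathbb C) = L(H)\otimes \mathbb C$, hence decomposes as semisimple part plus center with the center acting by semisimple endomorphisms on $E$; so $G^\mathbb C(\mathbb R)$ is reductive in the sense defined earlier in the paper.

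Next I would invoke Mostow's theorem from \cite{Mostow:SelfAdjointGroups}: any real algebraic reductive subgroup of $GL(E)$ is conjugate in $GL(E)$ to a self-adjoint subgroup with respect to the standard inner product. Equivalently, there exists an inner product $\langle\cdot,\cdot\rangle$ on $E$ such that $G^\mathbb C(\mathbb R)$ is stable under the transpose defined by $\langle\cdot,\cdot\rangle$. Declaring $\theta$ to be the Cartan involution of $GL(E)$ associated with this inner product then gives $\theta(G^\mathbb C(\mathbb R)) = G^\mathbb C(\mathbb R)$, which is the conclusion.

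The substantive content is Mostow's self-adjoint groups theorem itself, so the only real obstacle in this proof is the bookkeeping required to see that $G^\mathbb C(\mathbb R)$ — rather than $G$ or $\overline H$ — is the correct subgroup to which Mostow's result should be applied; once that identification is in place, the statement follows immediately by citation.
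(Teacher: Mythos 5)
Your proposal is correct and follows essentially the same route as the paper, which states this result by direct appeal to Mostow's self-adjoint groups theorem: one checks that $G^\mathbb C(\mathbb R)$ is a real algebraic reductive (completely reducible) subgroup of $GL(E)$ with finitely many components and then cites Mostow to get an inner product on $E$ making it self-adjoint, hence stable under the associated Cartan involution. Your added bookkeeping (that $L(G^\mathbb C(\mathbb R))$ is reductive and that transpose-stability of a group is the same as $\theta$-stability) is exactly the verification implicit in the paper's citation.
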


\begin{prop}[Borel, Proposition 13.5 \cite{BHC}] Let $\rho :G^\mathbb C(\mathbb R)\to GL(V)$ be a rational representation.  Let $\theta$ be a Cartan involution of $GL(E)$ such that $G^\mathbb C(\mathbb R)$ is $\theta$-stable.  Then there exists a Cartan involution $\theta_1$ of $GL(V)$ such that $\rho \circ \theta = \theta_1 \circ \rho$.\end{prop}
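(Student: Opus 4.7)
The plan is to construct $\theta_1$ from a $\rho^\mathbb C(U)$-invariant Hermitian inner product on $\VC$, where $U\subseteq\GC$ is a compact real form, by then restricting its real part to $V$.

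Write $\theta(X)=-X^t$ and decompose $\g=L(\GC(\mathbb R))=\lk\oplus\p$, with $\lk$ skew-symmetric and $\p$ symmetric for the inner product on $E$ defining $\theta$; let $K=\GC(\mathbb R)^\theta$, a maximal compact subgroup, so that $\GC(\mathbb R)=K\cdot\exp(\p)$ by the global Cartan decomposition of a $\theta$-stable reductive subgroup of $GL(E)$. Inside $\gc$ form the compact real form $\lu=\lk\oplus i\p$, and let $U$ be the analytic subgroup of $\GC$ with Lie algebra $\lu$. By the structure theory of complex reductive groups, $U$ is a compact subgroup of $\GC$ (a maximal compact subgroup, in fact).

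Because $U$ is compact, averaging any Hermitian form on $\VC$ over $\rho^\mathbb C(U)$ yields a $\rho^\mathbb C(U)$-invariant Hermitian inner product $h$ on $\VC$. Set $\langle v,w\rangle:=\mathrm{Re}\,h(v,w)$ for $v,w\in V$; this is a real inner product on $V$, and I take $\theta_1(g):=(g^t)^{-1}$ to be the associated Cartan involution of $GL(V)$. The verification that $\rho\circ\theta=\theta_1\circ\rho$ reduces, via $\GC(\mathbb R)=K\exp(\p)$, to two checks. For $k\in K\subseteq U$, $\rho(k)$ is unitary for $h$ and hence orthogonal for $\langle\cdot,\cdot\rangle$ on $V$, matching $\theta(k)=k$. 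For $X\in\p$, the element $iX\in\lu$ forces $i\rho(X)$ to be skew-Hermitian for $h$, so $\rho(X)$ (which preserves $V$) is symmetric for $\langle\cdot,\cdot\rangle$; consequently $\theta_1(\rho(\exp X))=(\exp\rho(X)^t)^{-1}=\exp(-\rho(X))=\rho(\theta\exp X)$. Multiplying the two cases gives the identity on all of $\GC(\mathbb R)$.

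The main obstacle I expect is the compactness of $U$: in the connected semisimple case this is the classical existence of a compact real form, but in our reductive and possibly disconnected setting one must additionally check that the central contribution $i(\mathfrak{z}\cap\p)$ exponentiates to a compact torus inside the complex central torus $\exp(\mathfrak{z}\otimes\mathbb C)$, and that the finite component group of $\GC(\mathbb R)$ does not interfere with the Cartan-decomposition step above. Both points follow routinely from the construction of $\GC$ as $\overline H_0\cdot G$ given earlier in the paper, together with the fact that $K$ meets every component of $\GC(\mathbb R)$.
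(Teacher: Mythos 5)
The paper offers no proof of this proposition --- it is quoted from Borel--Harish-Chandra (Prop.\ 13.5) --- so your argument stands on its own, and its overall shape (the unitarian trick: average a Hermitian form over a compact form $U$ of $\GC$, take the real part on $V$, and check the intertwining separately on $K$ and on $\exp(\p)$) is the standard and correct route; the verification on $K\exp(\p)$ and the passage to all of $\GC(\mathbb R)$ via the automorphism property of $\theta_1$ are fine.

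There is, however, one concrete step that fails as written: you define $U$ to be the \emph{analytic} (hence connected) subgroup of $\GC$ with Lie algebra $\lu=\lk\oplus i\p$ and then assert $K\subseteq U$. This is false in general when $\GC$ or $K$ is disconnected, a case the paper explicitly allows: for $G=O(p,q)$ one gets $\GC=O_n(\mathbb C)$, $K=O(p)\times O(q)$, while the analytic subgroup is $SO(n)$, which misses the determinant $-1$ components of $K$. Since your case $k\in K$ rests precisely on $h$ being $\rho^{\mathbb C}(K)$-invariant, averaging only over the connected group leaves that invariance unproven; your closing remark that the component group ``does not interfere'' addresses the Cartan decomposition of $\GC(\mathbb R)$ but not this point. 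The repair is easy and is exactly the paper's own observation in Section 2 that the relevant maximal compact is $U=KU_0$: average $h$ over $KU_0$ (compact, since $Ad\,K$ preserves $\lk$ and $\p$, hence $\lu$, so $K$ normalizes $U_0$), or equivalently over the full fixed-point group of the extended Cartan involution. Relatedly, the compactness worry you flag for $U_0$ is most cleanly dispatched not by abstract compact-real-form theory but by Mostow self-adjointness: $\lu$ acts skew-Hermitianly for the compatible Hermitian form on $E^{\mathbb C}$, so $U_0$ (and $KU_0$) sits inside the closed subgroup $\GC\cap U(E^{\mathbb C})$ of a unitary group, which is compact. With these adjustments your proof is complete.
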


This proposition is extended in the next proposition which follows from sections 1 and 2 of \cite{RichSlow}.

\begin{prop}\label{prop: inner prod on V}  Let $G$ be defined as above and $\rho: G\to GL(V)$ a rational representation, then
    \begin{enumerate}
        \item There exists a $K$-invariant inner product on $V$ such that $G$ is
        self-adjoint; 
        hence, the Lie algebra $L(G)= \g$ is also self-adjoint.  That is, there exist Cartan involutions $\theta$, $\theta_1$ on $G$, $\rho(G)$, respectively, such that $\rho \circ \theta = \theta_1 \circ \rho$.

        \item There exist decompositions of $G$ and $\g$, called Cartan decompositions, so that
        $G = KP$ as a product of manifolds and $\g=\lk \oplus \p$.  Here
        $K=\{g\in G \ | \ \theta(g)=g\}$ is a maximal compact subgroup of $G$, $\lk = L(K)=\{X\in \g \ | \ \theta(X)=X \}$,
          $\p = \{X\in \g \ | \ \theta(X)=-X\}$, and $P = exp(\p)$.  Moreover, there exists an $Ad K$-invariant inner
        product $\langle \langle \cdot, \cdot \rangle \rangle $ on $\g$ so that $\g=\lk
         \oplus \p$ is orthogonal and, for $X\in \p$, $ad\ X$ is a symmetric transformation relative to $\langle \langle \cdot , \cdot \rangle \rangle $.

        \item The inner product $\langle \cdot, \cdot \rangle$ on
        $V$ is $K$-invariant, $\rho(X)$ are symmetric
        transformations for $X\in \p$, and $\rho(X)$ are skew-symmetric transformations for $X\in \lk$.
    \end{enumerate}
\end{prop}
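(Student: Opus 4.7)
The plan is to bootstrap the preceding two propositions: Mostow's theorem supplies a Cartan involution $\theta$ of $GL(E)$ preserving $\GC(\mathbb{R})$, and Borel's Proposition 13.5 then produces a compatible Cartan involution $\theta_1$ of $GL(V)$ with $\rho\circ\theta=\theta_1\circ\rho$. Since $G$ is a finite index subgroup of $\GC(\mathbb{R})$ containing its identity component, I first want to check that $G$ itself is $\theta$-stable. This should follow from the fact that $\theta$ preserves the identity component and permutes the finitely many components of $\GC(\mathbb{R})$ that meet $G$ — in the construction of $\GC$ earlier in the paper, each component of $\GC$ already meets $G$, and the same argument at the real level shows $\theta(G)=G$.

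Having $\theta$ on $G$ and $\theta_1$ on $\rho(G)$, I read the inner product on $V$ directly off $\theta_1$: if $\theta_1(g)=(g^t)^{-1}$ with respect to $\langle\cdot,\cdot\rangle$ on $V$, then $K=G^\theta$ satisfies $\rho(k)^t=\rho(k)^{-1}$, so $\rho(K)\subseteq O(V,\langle\cdot,\cdot\rangle)$, giving $K$-invariance. Differentiating $\rho\circ\theta=\theta_1\circ\rho$ gives $\rho(\theta X)=-\rho(X)^t$, so $X\in\p$ with $\theta X=-X$ forces $\rho(X)^t=\rho(X)$ (symmetric), while $X\in\lk$ with $\theta X=X$ forces $\rho(X)^t=-\rho(X)$ (skew). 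This handles part (c) and the self-adjointness claim in (a). The Cartan decomposition $G=KP$ with $\p=\{X:\theta X=-X\}$, $P=\exp(\p)$, and $K$ maximal compact is then the standard Cartan decomposition for real reductive groups stable under a Cartan involution, for which I would cite the relevant sections of Richardson–Slodowy.

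For the inner product on $\g$, the cleanest construction is to take a $\theta$-invariant inner product on the ambient $\mathfrak{gl}(E)$ — for instance $\langle\!\langle A,B\rangle\!\rangle_{\mathfrak{gl}(E)}=\operatorname{tr}(AB^t)$ where the transpose is the one defining $\theta$ — and restrict to $\g$. This restriction is automatically $\theta$-invariant, so $\lk$ and $\p$ are orthogonal. Invariance under $\operatorname{Ad}K$ follows because $\operatorname{Ad}(k)$ acts on $\g\subseteq\mathfrak{gl}(E)$ by conjugation by the orthogonal operator $k$, which preserves the trace form. For $X\in\p$, the operator $X$ on $E$ is symmetric, so conjugation by $X$ is symmetric on $\mathfrak{gl}(E)$ with respect to the trace form, hence $\operatorname{ad}X$ is symmetric on $\g$.

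The main obstacle I anticipate is the one piece of bookkeeping that is easy to overlook: verifying that the compatible Cartan involution furnished by Mostow–Borel actually stabilizes $G$ itself, not merely $\GC(\mathbb{R})$, and that the resulting $K=G^\theta$ is maximal compact in $G$. Once this is settled using the component structure of $G$ inside $\GC(\mathbb{R})$ established in the previous section, everything else is a direct transcription of the Cartan involution formalism to the ambient $GL(E)$ and $GL(V)$ and restriction to $G$ and $\g$.
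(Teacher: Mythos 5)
Your proposal is correct and follows essentially the same route as the paper, which gives no independent proof but, just as you do, derives the statement from Mostow's theorem, Borel's Proposition 13.5, and the Cartan-decomposition results in sections 1 and 2 of Richardson--Slodowy, with your restriction of the trace form $\operatorname{tr}(AB^{t})$ on $\mathfrak{gl}(E)$ being the standard explicit choice of $\langle\langle\cdot,\cdot\rangle\rangle$. The only wobble is your justification of $\theta(G)=G$: not every component of $\GC(\mathbb R)$ need meet $G$, but the claim still holds because $\theta(g)g^{-1}=(gg^{t})^{-1}$ is a positive-definite element of the self-adjoint group $\GC(\mathbb R)$ and hence lies in $\GC(\mathbb R)_0$, so $\theta$ preserves each connected component of $\GC(\mathbb R)$ and therefore preserves $G$, which is a union of such components.
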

The subspaces $\lk$ and $\p$ that arise in the Cartan decomposition above have the following set of relations

    $$\begin{array}{lcr} \left [ \lk ,\lk \right ] \subseteq \lk,\hspace{.5in} &
    \left [ \lk ,\p \right ] \subseteq \p,\hspace{.5in} &
    \left [ \p ,\p \right ] \subseteq \lk \end{array}$$
This is easy to see since $\lk$ and $\p$ are the $+1,-1$
eigenspaces, respectively, of the Cartan involution $\theta$.  We
point out that our $Ad \ K$-invariant inner product on $\g$
restricts to such on $\p$ as the relations above show that $\p$ is
$Ad\ K$-invariant.  Additionally, if the group $G$ were semi-simple, then up to scaling the only choice for $\langle \langle \cdot, \cdot \rangle \rangle $ would be $-B(\theta(\cdot),\cdot)$ on each simple factor of $\g$, where $B$ is the Killing form of $G$.

Our Cartan involution $\theta$ on $G$ is the restriction of a Cartan involution on $G^\mathbb C$, see \cite[2.8 and section 8]{RichSlow} and \cite{Mostow:SelfAdjointGroups}.  This gives  Cartan decompositions $\gc = \lu \oplus \q$ and $\GC = U \cdot Q$, where $U$ is a maximal compact subgroup of $\GC$, $Q = exp(\q)$, and $U\cap Q = \{1\}$.

We observe that the maximal compact groups $U$ and $K$ are related by $U=KU_0$.  To see this, it suffices to prove  $KU_0Q = UQ = \GC$ since $KU_0 \subseteq U$ and $U\cap Q = \{1 \}$.  Since $U_0Q=\overline H_0$ and $P\subseteq Q$, we obtain $KU_0 Q= KP\cdot \overline H_0 = G\cdot \overline H_0 = \overline H_0 \cdot G = \GC$.

The subspaces $\lu$, $\q \subseteq \gc$ are related to $\lk$, $\p \subseteq \g$ as follows
\begin{eqnarray*} \lu &=& \lk \oplus i\p \\
                \q &=& i\lk \oplus \p \end{eqnarray*}
These two subspaces of $L \GC =\g^\mathbb C$ have a nice interpretation relative to a particular inner product on $V^\mathbb C$.  Our construction of this inner product on $V^\mathbb C$ is similar to that done in sections 2 and 8 of \cite{RichSlow}.  We will be consistent with their notation.\\

\begin{prop}\label{prop: extend inner product on V to VC} The $K$-invariant inner product $<,>$ on $V$, described in Proposition \ref{prop: inner prod on V}, extends to a $U$-invariant inner product $S$ on $V^\mathbb C$ with a similar list of properties for $G^\mathbb C$.  Additionally, the inner product $\ll,\gg $ on $\g$ extends to an $Ad\ U$-invariant inner product $\mathbb S$ on $\g^\mathbb C$.\end{prop}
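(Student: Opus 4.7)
The plan is to construct $S$ by the usual complexification recipe: write $V^{\mathbb C} = V \oplus iV$ and define, for $v_1, v_2, w_1, w_2 \in V$,
\[
S(v_1 + iw_1,\, v_2 + iw_2) \;=\; \langle v_1, v_2\rangle + \langle w_1, w_2\rangle.
\]
This is manifestly a real inner product on the $2\dim V$-dimensional real vector space $V^{\mathbb C}$ that restricts to $\langle\cdot,\cdot\rangle$ on $V$. The analogous recipe on $\g \oplus i\g$ defines $\mathbb S$. What needs to be verified is that $U$ preserves $S$ and that $\rho(\lu)$, $\rho(\q)$ have the right symmetry type, together with the analogous facts for $\mathbb S$ and $\mathrm{ad}$.

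For the $U$-invariance, I would exploit the observation established just before the statement: $U = K U_0$. Since $K$ already acts on $V$ preserving $\langle\cdot,\cdot\rangle$, the extension to $V^{\mathbb C}$ by complex linearity preserves both summands of the definition of $S$, so $K$ preserves $S$. It therefore suffices to check that $U_0$ preserves $S$, and for this I would verify that every $\rho(X)$ with $X \in \lu = \lk \oplus i\p$ is skew-symmetric with respect to $S$. For $X \in \lk$ this is immediate from item (c) of Proposition \ref{prop: inner prod on V} applied componentwise. For $X = iY$ with $Y \in \p$, the operator $\rho(iY)$ sends $v + iw$ to $-\rho(Y)w + i\rho(Y)v$, and a short bilinear computation using the fact that $\rho(Y)$ is symmetric on $V$ shows $S(\rho(iY)\xi, \eta) = -S(\xi, \rho(iY)\eta)$. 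Combined with exponentiating and using $U = K U_0$, this yields the desired $U$-invariance.

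The symmetry statement for $\q = i\lk \oplus \p$ is proved by an entirely parallel bilinear computation: $\rho(Y)$ for $Y \in \p$ is symmetric on $V$, hence symmetric on $V^{\mathbb C}$ with respect to $S$; and for $W \in \lk$, writing out $\rho(iW)(v+iw) = -\rho(W)w + i\rho(W)v$ and using that $\rho(W)$ is skew on $V$ gives $S(\rho(iW)\xi,\eta) = S(\xi,\rho(iW)\eta)$. This gives the full ``similar list of properties'' for $G^{\mathbb C}$: namely $S$ is $U$-invariant, $\rho(X)$ is skew-symmetric for $X \in \lu$, and $\rho(X)$ is symmetric for $X \in \q$. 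The statement for $\mathbb S$ and the adjoint representation of $G^{\mathbb C}$ on $\g^{\mathbb C}$ is obtained by running exactly the same argument with $\rho$ replaced by $\mathrm{ad}$ and $V$ replaced by $\g$; here item (b) of Proposition \ref{prop: inner prod on V} supplies the skew/symmetric data for $\mathrm{ad}(\lk)$ and $\mathrm{ad}(\p)$.

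I do not expect any serious obstacle: the proof is essentially a bookkeeping exercise in bilinear algebra on $V \oplus iV$. The one place where care is warranted is the passage from $U_0$-invariance (obtained from the Lie algebra calculation) to full $U$-invariance, since $U$ may be disconnected; this is exactly why the relation $U = KU_0$ from the preceding paragraph of the excerpt is cited, and it is the only non-purely-computational ingredient.
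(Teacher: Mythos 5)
Your proposal is correct and follows essentially the same route as the paper: the same definition of $S$ on $V\oplus iV$, the same reduction of $U$-invariance to $K$-invariance plus $U_0$-invariance via $U=KU_0$ and the skew-symmetry of $\rho(\lu)$, and the same treatment of $\mathbb S$ as the adjoint-representation case. The only difference is that you write out the bilinear computations for $\rho(i\p)$ and $\rho(\q)$ that the paper explicitly leaves to the reader.
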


\begin{proof}  The proof of this fact follows the construction of $S$ in A2 (proof of 2.9) in \cite{RichSlow}.  Define the inner product on $V^\mathbb C $ as
$$S(v_1+i\ v_2, w_1+i\ w_2) = <v_1,w_1>+<v_2,w_2>$$
In this way, $V$ and $iV$ are orthogonal under $S$ and $i$ acts as a skew-symmetric transformation on $V^\mathbb C$ relative to $S$.  $S$ is positive definite on $V^\mathbb C$.

Recall that $U=KU_0$ (see the remark above), and observe that $S$ is $K$-invariant as $K$ preserves $V$, $iV$ and $<,>$ is $K$-invariant.  Thus to show $U$-invariance, once just needs to show $U_0$-invariance.  This follows since $\rho (\lu)$ acts skew-symmetrically and $U_0 = exp(\lu)$.

We leave to the reader the details of showing that $\rho (\lu)$ acts skew-symmetrically and  $\rho (\q)$ acts symmetrically relative to $S$.  Lastly, the extension of $<<,>>$ on $\g$ to $\mathbb S$ on $\gc$ is a special case of the above work.
\end{proof}

We say that the inner products on our complex spaces are \textit{compatible} with the inner products on the underlying real spaces.  The inner product $S$ constructed here gives rise to a $U$-invariant Hermitian form $H=S+iA$ on $V^\mathbb C$ where we define $A(x,y)=S(x,iy)$.  This Hermitian form is compatible with the real structure $V$ in the sense of Richardson and Slodowy, that is, $A=0$ when restricted to $V\times V$; see sections 2 and 8 of \cite{RichSlow}.

\subsection*{Moment maps}
Next we define our moment maps.  The motivation for these
definitions comes from symplectic geometry and the actions of compact
groups on compact symplectic manifolds.  In the complex setting,
this moment map coincides with the one from the symplectic structure
on $\cpvc$.  For more information see \cite{Ness} and
\cite{Guillemin-Sternberg}.\\

\textbf{Real moment maps}. Given $G\circlearrowright V$ we define $\tilde{m} :V \to \p$ implicitly by
    $$\ll \tilde m (v), X \gg \ = \ <Xv,v>$$
for all $X\in \p$.  Notice that $\tilde m (v)$ is a real homogeneous
polynomial of degree 2.  Equivalently, we  could define
$\tilde m :V\to \g$; then using $K$-invariance and $\lk \perp \p$ we
obtain $\tilde m (V) \subseteq \p$.

We can just as
well do this for $\GC \actingon \VC$ where we regard $\GC$ as a real
Lie group.  We use the inner product $\mathbb S$ on $\VC$. The (real) moment map for $\GC \actingon \VC$, denoted by
$\tilde n:\VC \to \q$, is defined by
    $$\mathbb S (\tilde n (v), Y) = S(Yv,v)$$
for $Y\in \q$ and $v\in \VC$.

Since these polynomials are homogeneous, they give rise to well defined maps on (real) projective space.  Define
\[
\begin{array}{ll}
     m  :  \rpv \to \p  \ \ \ \ \ \ \ &  n: \rpvc \to \q \\
    m[v] =\tilde m(\frac{v}{|v|})=\frac{\tilde m(v)}{|v|^2} \ \ \ \
    \ \ \ \
    & n[w] = \tilde n(\frac{w}{|w|})=\frac{\tilde n(w)}{|w|^2}
\end{array} \]
where $|w|^2=S(w,w)$ and $S=<,>$ on $V$.  Since
$V\subseteq \VC$ we have $\rpv \subseteq \rpvc$; this is our main
reason for studying the real moment map on $\GC$.  The next lemma compares these two real moment maps.

\begin{lemma}\label{lemma: n|pv = m} $n$ restricted to $\rpv$ equals $m$.
\end{lemma}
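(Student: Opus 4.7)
The plan is to show the stronger statement that $\tilde n(v) = \tilde m(v)$ for every $v \in V$; the lemma for $m$ and $n$ on projective space then follows immediately since for $v \in V$ we have $|v|^2 = S(v,v) = \langle v,v\rangle$ (because $S$ extends $\langle\cdot,\cdot\rangle$).

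First I would record the two key compatibility facts that come from the construction of $S$ and $\mathbb S$ in Proposition \ref{prop: extend inner product on V to VC}: namely, $V \perp iV$ under $S$, and $\g \perp i\g$ under $\mathbb S$, with $\mathbb S$ restricting to $\ll\cdot,\cdot\gg$ on $\g$. Combined with the Cartan decomposition $\q = i\lk \oplus \p$, the plan is to evaluate both sides of the defining identity for $\tilde n$ at arbitrary $Y \in \q$ written as $Y = iX_1 + X_2$ with $X_1 \in \lk$, $X_2 \in \p$.

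The computation on the $S$-side: for $v \in V$, the element $X_1 v$ lies in $V$, so $iX_1 v \in iV$, which is $S$-orthogonal to $v$; thus the $iX_1$-contribution to $S(Yv,v)$ vanishes, leaving $S(X_2 v, v) = \langle X_2 v, v\rangle = \ll \tilde m(v), X_2 \gg$ by definition of $\tilde m$. The computation on the $\mathbb S$-side: since $\tilde m(v) \in \p \subseteq \q \subseteq \gc$, the $\mathbb S$-orthogonality of $\g$ and $i\g$ kills the pairing with $iX_1$, and we obtain $\mathbb S(\tilde m(v), Y) = \ll \tilde m(v), X_2 \gg$. Equating the two gives $\mathbb S(\tilde m(v), Y) = S(Yv,v) = \mathbb S(\tilde n(v), Y)$ for every $Y \in \q$. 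Since $\tilde m(v) \in \p \subseteq \q$ and $\tilde n(v) \in \q$, non-degeneracy of $\mathbb S$ on $\q$ forces $\tilde n(v) = \tilde m(v)$.

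There is no real obstacle here; the content is bookkeeping with the Cartan decompositions and the orthogonality relations $V \perp iV$, $\g \perp i\g$ built into $S$ and $\mathbb S$. The only mild subtlety worth being explicit about is that $\tilde m(v)$ must be recognized as living in $\q$ (via $\p \subseteq \q$) so that the uniqueness clause for $\tilde n(v)$ applies.
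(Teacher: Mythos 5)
Your proposal is correct and follows essentially the same route as the paper: both rest on $V\perp iV$ under $S$, $\g\perp i\g$ under $\mathbb S$, the compatibility of $S$ with $\langle\cdot,\cdot\rangle$ and $\mathbb S$ with $\ll\cdot,\cdot\gg$, and the decomposition $\q = i\lk\oplus\p$. The only cosmetic difference is that the paper first shows $\tilde n(v)\in\p$ and then matches it with $\tilde m(v)$ by testing against $X\in\p$, whereas you test both against all of $\q$ and invoke nondegeneracy of $\mathbb S$ on $\q$.
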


\begin{proof}Recall that $n$ takes values in $\q=i\lk \oplus \p$ and $m$ takes values in $\p \subseteq \q$.  Take $v\in V$ and $X\in \lk$ then
    $$\mathbb S (\tilde n(v),iX)=S(iX\cdot v,v)=0$$
as $V \perp iV$ (see Proposition \ref{prop: extend inner product on V to VC} 
), and we
are using $(iX)\cdot v = i (X\cdot v)$,  i.e., $\gc$ acts $\mathbb
C$-linearly on $\VC$.  Since $\g \perp i\g$ under $\mathbb S$, we
have $i\lk \perp \p$.  Thus $ \tilde n(v) \in \p \subseteq \q$.
Now take $X\in \p$.

\[ \begin{array}{clll}

    \mathbb S(\tilde n(v),X) &=& \ll \tilde n(v), X \gg  & \mbox{
        by compatibility of $\g \subseteq \g^\mathbb C$}\\
    || \\
    S(Xv,v) &=& <Xv,v> & \mbox{ by compatibility of $V\subseteq \VC$}\\
            &=& \ll \tilde m(v),X \gg & \mbox{ by
            definition/construction of $\tilde m$ }

\end{array} \]
Therefore, $ \tilde n(v)=\tilde m(v)$ for $v\in V\subseteq \VC$, which
implies $n[v]=m[v]$ for $[v] \in \rpv \subseteq \rpvc$.
\end{proof}

\textbf{Complex moment maps}.  We choose a notation that is similar to Ness \cite{Ness} as we are following her definitions; the only difference is that we use $\mu$ where she uses $m$.  For $v\in \VC$, consider $\rho_v:G^\mathbb C \to \mathbb R$ defined by  $\rho_v(g)= |g\cdot v|^2$, where $|w|^2= H(w,w)= S(w,w)$.  Define a map $\mu:\cpvc \to \q^* = \mbox{Hom} (\q, \mathbb R)$ by $\mu (x)=\frac{d\rho_v(e)}{|v|^2}$, where $v\in \VC$ sits over $x\in \cpvc$,  cf. \cite[section 1]{Ness}.  We define the complex moment map $\mu^*:\cpvc \to \q$ by $\mu = \mathbb S (\mu^*, \cdot)$.  Note, taking the norm square of our complex moment map will give us the norm square of the moment map in Kirwan's setting; in Kirwan's language $i\mu$ would be the moment map \cite[section 1]{Ness}.\\

Let $\pi$ denote the projection $\pi: \rpvc \to \cpvc$.

\begin{lemma}\label{lemma:mu circ pi=2n} The complex and real moment maps for $G^\mathbb C$ are related by $\mu^* \circ \pi = 2n$
\end{lemma}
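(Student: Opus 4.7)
The plan is to unravel both definitions and show that evaluating $\mu$ on a tangent direction $Y \in \q$ recovers twice the pairing $\mathbb S(n[v], Y)$. The factor of $2$ arises because differentiating $\rho_v(g) = S(gv,gv)$ at the identity produces a symmetric expression in which both $gv$ factors contribute, while the definition of $\tilde n$ only involves the single bilinear form $S(Yv, v)$.

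First, pick $[v] \in \rpvc$ and $Y \in \q$. I would compute
\[
d\rho_v(e)(Y) = \ddtat S(\exp(tY)v,\, \exp(tY)v) = S(Yv, v) + S(v, Yv).
\]
Next, I would invoke Proposition \ref{prop: extend inner product on V to VC}, which says that elements of $\q$ act as $S$-symmetric transformations on $\VC$. This yields $S(v, Yv) = S(Yv, v)$, and hence
\[
d\rho_v(e)(Y) = 2\, S(Yv, v).
\]

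Now, by the definition of $\tilde n$ on $\VC$, we have $S(Yv,v) = \mathbb S(\tilde n(v), Y)$. Dividing by $|v|^2$ and using the definitions of $\mu$ and of $n$ on projective space,
\[
\mu(\pi[v])(Y) \;=\; \frac{d\rho_v(e)(Y)}{|v|^2} \;=\; \frac{2\,\mathbb S(\tilde n(v), Y)}{|v|^2} \;=\; 2\,\mathbb S(n[v], Y).
\]
Since this holds for all $Y \in \q$, and since $\mu(\pi[v]) = \mathbb S(\mu^*(\pi[v]), \cdot)$ by definition of $\mu^*$, non-degeneracy of $\mathbb S$ on $\q$ gives $\mu^*(\pi[v]) = 2\,n[v]$, which is the claim.

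I do not foresee a genuine obstacle here — the argument is a direct unwinding of the two definitions. The only subtle point is making sure that the symmetry of $\q$ on $\VC$ is with respect to the \emph{real} inner product $S$ (not the Hermitian form $H$), which is precisely what Proposition \ref{prop: extend inner product on V to VC} provides, and that the identification $\q \cong \q^*$ used to pass from $\mu$ to $\mu^*$ is made via the \emph{same} inner product $\mathbb S$ used to define $\tilde n$. With those conventions fixed, the identity $\mu^* \circ \pi = 2n$ is immediate.
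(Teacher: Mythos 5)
Your argument is correct and is essentially the paper's own proof: both differentiate $\rho_v$ at the identity to get $2S(Yv,v)$ and then identify this with $2\,\mathbb S(\tilde n(v),Y)$, the only cosmetic difference being that the paper expands $\mu^*(x)$ in an $\mathbb S$-orthonormal basis of $\q$ while you pair against an arbitrary $Y\in\q$ and invoke non-degeneracy. One minor remark: the equality $S(v,Yv)=S(Yv,v)$ already follows from the symmetry of the real inner product $S$ itself, so the appeal to $\q$ acting by $S$-symmetric operators is not actually needed at that step.
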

\begin{proof} Many of our computations have the same flavor as those of Ness; we employ her ideas.  Take an orthonormal basis $\{ \alpha_i \}$ of $i\lu=\q$ under $\mathbb S$.  Also let $x=\pi [v] \in \cpvc$ for $v\in \VC$.  Then
    \begin{eqnarray*}
    \mu ^*(x) &=& \sum_i \mathbb S(\mu^*(x),\alpha_i)\alpha_i\\
    &=& \sum_i [\mu (x)\alpha_i] \alpha _i\\
    &=& \sum_i \frac{1}{||v||^2} d\rho _v (e)(\alpha _i)\alpha _i \\
    &=& \sum_i \frac{1}{||v||^2} \ddtat \ ||exp \
    t\alpha_i \cdot v ||^2\alpha_i
    \end{eqnarray*}
Here the norm on $\VC$ is from $H=S+iA$.  But $S$ is the inner
product being used on $\VC$, and so $H(w,w)=S(w,w)$ tells us that
$\mu^* (x)$
    \begin{eqnarray*}
     &=& \sum_i \frac{1}{||v||^2} 2 S(\alpha_i v,v)
     \alpha_i \ \ \ \ \ \ \\
     &=& \sum_i 2 \ \mathbb S( \tilde n[v],\alpha_i)\alpha_i\\
     &=& 2 \tilde n[v]
    \end{eqnarray*}
\end{proof}

\textit{Remark}. Since $\rpv$ is not a subspace of $\cpvc$, we use $\rpvc$ and the real
 moment map of $G^\mathbb C$ to work between the known results of Kirwan and Ness to get information about our real group $G \actingon \rpv$.

\section{Comparison of Real and Complex cases}
Most of algebraic geometry and Geometric Invariant Theory has been worked out exclusively for fields which are algebraically closed.  We are interested in the real category and will exploit all the work that has already been done over $\mathbb C$.  We use and refer the reader to \cite{Whitney} as our main reference for real algebraic varieties.

Recall that our representation $\rho:G \to GL(V)$ is the restriction of a representation of $\GC$.  The
following is proposition 2.3 of \cite{BHC} and section 8 of \cite{RichSlow}.  Originally this was stated as a comparison between
$\GC(\mathbb R)_0$-orbits and $\GC$-orbits, however, it can be restated as a comparison between $G$ and $G^\mathbb C$ orbits, for any $G$ satisfying $\GC(\mathbb R)_0 \subseteq G \subseteq \GC(\mathbb R)$.  This is true as  $\GC(\mathbb R)_0$ has finite index in $G$.

\begin{thm}\label{BHC: real orbits from complex}  Let $v\in V$, then 
 $\GC \cdot v \cap V = \displaystyle{\bigcup_{i=1}^m} X_i$ where each $X_i$ is a  $G$-orbit.  
Moreover, $\GC \cdot v$ is closed in $V^\mathbb C$ if
and only if $G\cdot v$ is closed in $V$.
\end{thm}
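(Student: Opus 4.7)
The plan is to dispatch the two claims independently.

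For the finiteness of $G$-orbits in $\GC \cdot v \cap V$, the strategy is to use real algebraic geometry. The complex orbit $\GC \cdot v$ is a locally closed smooth complex subvariety of $\VC$, so its intersection with $V$ is a real algebraic set, which by Whitney's theorem (already invoked for $\overline H$) has only finitely many connected components in the Hausdorff topology. To conclude finiteness, it suffices to show that each $G$-orbit in $\GC \cdot v \cap V$ is open (and hence also closed, as the complement of a union of other open $G$-orbits), since then the orbits are unions of connected components. At $w \in \GC \cdot v \cap V$, I would first compute $\gc \cdot w \cap V = \g \cdot w$: writing $Z = X + iY$ with $X,Y \in \g$, the condition $Zw = Xw + i(Yw) \in V$ forces $Yw = 0$, so $Zw = Xw \in \g \cdot w$. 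The same intersection computation applied to the stabilizer gives $\gc_w = \g_w \otimes \mathbb C$ and hence $\dim_{\mathbb R}\g_w = \dim_{\mathbb C}\gc_w$; combined with $\dim_{\mathbb R}\g = \dim_{\mathbb C}\gc$ (recalled in Section 2), this shows $\dim_{\mathbb R} G \cdot w = \dim_{\mathbb C}\GC \cdot w$, matching the local dimension of the real variety $\GC \cdot v \cap V$ at $w$. Thus $G \cdot w$ is open in it, and finiteness follows.

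For the closedness equivalence, the easy direction ($\GC \cdot v$ closed $\Rightarrow$ $G \cdot v$ closed) is immediate from the finite decomposition: $\GC \cdot v \cap V = \bigcup_{i=1}^m X_i$ is closed in $V$, and each $X_i$ is both open and closed in this finite union, hence closed in $V$. For the converse, I would invoke the moment-map criterion for closed orbits. By Richardson-Slodowy, $G \cdot v$ is closed in $V$ iff there exists $w \in G \cdot v$ with $\tilde m(w) = 0$; viewing $\GC$ as a real reductive group acting on $\VC$ and applying the same criterion, $\GC \cdot v$ is closed in $\VC$ iff there exists $u \in \GC \cdot v$ with $\tilde n(u) = 0$. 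The same compatibility computation used in Lemma \ref{lemma: n|pv = m} then yields $\tilde n|_V = \tilde m$: for $v \in V$, $\mathbb S(\tilde n(v), iX) = S(iXv, v) = 0$ by $V \perp iV$, forcing $\tilde n(v) \in \p$, and pairing with $X \in \p$ gives $\tilde n(v) = \tilde m(v)$. Any zero of $\tilde m$ on $G \cdot v$ is therefore automatically a zero of $\tilde n$ on $\GC \cdot v$, completing the implication.

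The main obstacle is the reverse closedness implication, which genuinely requires an analytic input, namely the Richardson-Slodowy moment-map criterion in the real setting. Once that criterion is accepted, the moment-map compatibility does all the remaining work and circumvents the classical Galois-cohomological argument of \cite{BHC}. A secondary delicate point is the equality $\dim_{\mathbb R}\g_w = \dim_{\mathbb C}\gc_w$ needed for the openness of $G$-orbits, but this reduces to the same ``real points of a complexified linear condition'' observation that identifies $\gc \cdot w \cap V$ with $\g \cdot w$.
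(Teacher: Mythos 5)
Your proposal is correct, but note that the paper does not actually prove this theorem: it quotes it from \cite{BHC} (Proposition 2.3) and Section 8 of \cite{RichSlow}, adding only the remark that the statement for $\GC(\mathbb R)_0$ passes to any intermediate $G$ because $\GC(\mathbb R)_0$ has finite index in $G$. Your first half essentially reconstructs the Borel--Harish-Chandra argument: finiteness of the connected components of the (Zariski locally closed, not closed) real algebraic set $\GC\cdot v\cap V$ via Whitney, plus openness of each $G$-orbit via the computation $\gc\cdot w\cap V=\g\cdot w$. One step there deserves to be made explicit: equality of dimensions alone does not force an orbit to be open inside an arbitrary real algebraic set, so you should observe that $\GC\cdot v$ is stable under conjugation (since $\GC$ is defined over $\mathbb R$ and $v\in V$), hence $\GC\cdot v\cap V$ is the fixed-point set of an antiholomorphic involution on the smooth manifold $\GC\cdot v$ and is therefore itself smooth of real dimension $\dim_\mathbb C\GC\cdot v$ near $w$; with that, your tangent-space identity makes the orbit map of $G$ a submersion onto a neighborhood of $w$ in $\GC\cdot v\cap V$, and openness (hence finiteness of orbits) follows. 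Your second half replaces the original algebraic closedness argument of \cite{BHC} by the minimal-vector criterion (Theorem \ref{closed orbit thm}, from \cite{RichSlow}), applied both to $G\actingon V$ and to $\GC$ regarded as a real reductive group acting on $\VC$ with the compatible inner product of Proposition \ref{prop: extend inner product on V to VC}, together with $\tilde n|_V=\tilde m$ (the computation of Lemma \ref{lemma: n|pv = m}); this is non-circular, since the minimal-vector theorem is proved in \cite{RichSlow} independently of the orbit comparison, and it is close in spirit both to Richardson--Slodowy's own Section 8 and to how the paper later treats distinguished orbits. What your route buys is a self-contained proof that fits the paper's moment-map machinery and handles disconnected $G$ directly, making the finite-index remark superfluous; what the paper's citation buys is brevity.
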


Complex group orbits have some nice properties
that we don't enjoy over the real numbers.  For example, the Hausdorff and
Zariski closures of a group orbit are the same for a complex linear
algebraic group.  One property that does translate to the reals is that
the boundary of an orbit consists of orbits of
strictly lower dimension.  See section 8.3 of \cite{Humphreys} for the complex setting and see below for the real setting.  For some interesting examples of semi-simple real algebraic groups whose orbit closure is not the Zariski closure see \cite{EberleinJablo}.

\begin{prop}\label{observation on boundaries in V} Let $G$ and $\GC$ be defined as above.  Take $v\in V\subseteq \VC$. Then
\begin{enumerate}
    \item  $dim_\mathbb R G\cdot v = dim_\mathbb C \GC \cdot v$.
    \item  $\partial (G\cdot v) = \overline{G\cdot v} - G\cdot v$ consists of $G$-orbits of strictly smaller dimension.
    \item  $\GCv \cap \overline{\Gv}=\Gv.$
\end{enumerate}
\end{prop}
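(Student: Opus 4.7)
The plan is to prove (a), (c), (b) in that order, since the dimension identity (a) feeds into both of the others and (b) becomes essentially a corollary of (c) together with the known complex analog. For (a): since $v\in V$, the stabilizer $(\GC)_v$ is a complex algebraic subgroup of $\GC$ defined over $\mathbb R$, and the same Whitney-type dimension comparison used in Section 2 to establish $\dim_{\mathbb C}\overline H = \dim_{\mathbb R} H$ gives $\dim_{\mathbb C}(\GC)_v = \dim_{\mathbb R}(\GC)_v(\mathbb R)$. Since $G$ has finite index in $\GC(\mathbb R)$, the real stabilizer $G_v$ has finite index in $(\GC)_v(\mathbb R)$ and hence the same real dimension. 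Subtracting from the analogous identity for $G \subseteq \GC$ itself yields $\dim_{\mathbb R}\Gv = \dim_{\mathbb C}\GCv$.

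For (c), observe that $V$ is closed in $\VC$, so $\overline{\Gv}\subseteq V$. Theorem~\ref{BHC: real orbits from complex} writes $\GCv\cap V$ as a finite disjoint union $\Gv \sqcup X_2 \sqcup\cdots\sqcup X_m$, so
\[
\GCv\cap\overline{\Gv} \;=\; \Gv \;\cup\; \bigcup_{i\ge 2}\bigl(X_i\cap\overline{\Gv}\bigr).
\]
If some $X_i$ with $i\ge 2$ met $\overline{\Gv}$, then $G$-invariance of the closure would force $X_i\subseteq\overline{\Gv}$, and by disjointness of orbits $X_i\subseteq\partial(\Gv)$. However $X_i$ is a semi-algebraic subset of $V$ of dimension $\dim_{\mathbb R}\Gv$ by (a), while $\Gv$ itself is locally closed in $V$ (true for any orbit of a Lie group action), so the standard semi-algebraic fact that $\overline{S}\setminus S$ has strictly smaller dimension than a locally closed semi-algebraic set $S$ gives $\dim\partial(\Gv)<\dim\Gv$, a contradiction.

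For (b), take $w\in\partial(\Gv)\subseteq\overline{\GCv}$. Part (c) rules out $w\in\GCv$, for otherwise one would have $w\in\Gv$, contradicting $w\in\partial(\Gv)$. Hence $w$ lies in the complex boundary $\overline{\GCv}\setminus\GCv$, and the complex version (see \cite[section 8.3]{Humphreys}) shows this set consists of $\GC$-orbits of strictly smaller complex dimension; applying (a) to $\GC\cdot w$ and $\GCv$ converts the inequality into $\dim_{\mathbb R}G\cdot w < \dim_{\mathbb R}\Gv$, as required.

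The main obstacle I anticipate is justifying the dimension-drop inequality invoked in (c). One must verify that $\Gv$ is semi-algebraic (immediate from rationality of $\rho$ via the Tarski--Seidenberg theorem) and locally closed (true for any Lie group orbit), and then appeal to the fact that the boundary of a locally closed semi-algebraic set has strictly smaller dimension; this is standard and in the spirit of the references to real algebraic geometry already used in the paper (cf.\ \cite{Whitney}).
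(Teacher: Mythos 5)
Your proposal is correct in outline, and for parts (a) and (c) it takes a genuinely different route from the paper; your (b) is essentially the paper's argument. The paper proves (a) in one line at the isotropy-algebra level, via the identity $(\g_v)^{\mathbb C}=(\g^{\mathbb C})_v$ for $v\in V$ (if $X,Y\in\g$ then $(X+iY)v=Xv+iYv$ with $Xv,Yv\in V$, and $V\cap iV=0$), together with $\dim_{\mathbb R}G=\dim_{\mathbb C}\GC$. For (b) and (c) the paper leans on \cite[Proposition 2.3]{BHC}: each $G$-orbit $X_i$ in $\GCv\cap V$ is a finite union of connected components of $\GCv\cap V$, hence closed in it; this yields $\overline{\Gv}\cap\GCv=\Gv$ at once, and then the boundary statement follows exactly as in your (b) via \cite[section 8.3]{Humphreys} and part (a). Your substitute for that input in (c) --- semi-algebraicity of $\Gv$ via Tarski--Seidenberg plus the frontier-dimension theorem, combined with the equality of dimensions of the $X_i$ from (a) --- is a legitimate, self-contained alternative; what it buys is independence from the connected-component statement of \cite{BHC}, at the price of importing standard real algebraic geometry in the spirit of \cite{Whitney}.

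Two of your justifications need repair, though neither is fatal. First, in (a): Whitney's comparison as used in Section 2 computes the complex dimension of the \emph{Zariski closure of a real variety}; to apply it to the stabilizer you must know that $(\GC)_v$, an algebraic group defined over $\mathbb R$, has real points of full dimension, i.e.\ that its real points are Zariski dense in its identity component. This is true for linear algebraic groups in characteristic zero but fails for general varieties defined over $\mathbb R$ (e.g.\ $x^2+y^2=0$), so it must be argued; the isotropy-algebra identity above is the cheaper fix and is exactly what the paper does. Second, your claim that orbits of Lie group actions are always locally closed is false (an irrational line flow on the torus has dense, non--locally-closed orbits). Fortunately you do not need it: the frontier-dimension theorem $\dim(\overline S\setminus S)<\dim S$ holds for every semi-algebraic set $S$, locally closed or not; alternatively, local closedness of $\Gv$ does hold in this algebraic setting, but proving it amounts to the openness of $\Gv$ in $\GCv\cap V$, i.e.\ the very \cite{BHC} ingredient your argument was designed to avoid.
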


\begin{proof}[Proof of a] Notice that for a real Lie group $H$, $H\cdot v \simeq H/H_v$.  Let $\mathfrak h$ be the Lie algebra of H. Then at the Lie algebra level it is easy to see that $(\mathfrak h_v)^\mathbb C = (\mathfrak h^\mathbb C)_v$, for $v\in V\subseteq \VC$.  As $\dim_\mathbb R G = \dim_\mathbb C G^\mathbb C$, we are done.\\

\noindent \textit{Proof of b}. Recall two facts about complex group orbits.  First, the boundary $\overline{G^\mathbb C \cdot v} - G^\mathbb C \cdot v$ of the complex group orbit $\GC \cdot v$ consists of $\GC$-orbits of strictly smaller dimension, see \cite[section 8.3]{Humphreys}.  Second, $\GC \cdot v \bigcap V = \displaystyle{\bigcup_1^m }X_i$, where each $X_i$ is a $G$-orbit.  Moreover, each $X_i$ is closed in $\GC \cdot v \bigcap V $ as it is a finite union of connected components of $G^\mathbb C \cdot v \cap V$, see \cite[Proposition 2.3]{BHC}.
If $v\in X_i$ for $1\leq i \leq m$, then $G\cdot v = X_i$ and $\overline{G\cdot v}\cap G^\mathbb C\cdot v = X_i$.  If $w\in \overline{G\cdot v}-G\cdot v$, then $w\in \overline{G^\mathbb C\cdot v}-G^\mathbb C\cdot v$, and it follows from a and the first remark above that $G\cdot w$ has smaller dimension than $G\cdot v$.
\\

\noindent \textit{Proof of c}. This follows immediately from b and its proof.
\end{proof}

\subsection*{Orbits in Projective space}
Since our groups act linearly on vectors spaces we can consider the induced actions on projective space
    $G \circlearrowright  \mathbb P V$ and $\GC \circlearrowright
    \mathbb {RP} V^ \mathbb C$.

\begin{lemma}\label{lemma: in proj. orbits  GC closure cap G = G} For $v\in V$, $\GC \cdot [v] \cap \overline{G\cdot [v]}
    = G\cdot [v]$ in $\mathbb{RP} V^\mathbb C$.
\end{lemma}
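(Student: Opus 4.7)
My plan is to lift the projective identity to a statement in $V^\mathbb C$ where Proposition \ref{observation on boundaries in V}(c) can be invoked. Let $\pi : V^\mathbb C \setminus \{0\} \to \rpvc$ be the natural projection. Since $\pi$ is a continuous open surjection, $\pi^{-1}(\overline{S}) = \overline{\pi^{-1}(S)}$ for every $S \subseteq \rpvc$. Thus the desired identity is equivalent to the $V^\mathbb C$-statement
$$\mathbb R^* \cdot \GC v \ \cap \ \overline{\mathbb R^* \cdot Gv} \ = \ \mathbb R^* \cdot Gv.$$

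The natural device for handling the extra scalars $\mathbb R^*$ is to enlarge $G$ by central scalar matrices. Set $\tilde G := \mathbb R^* I \cdot G \subseteq GL(V)$. Since $\mathbb R^* I$ is a central, real algebraic, reductive subgroup of $GL(V)$ that commutes with the ambient real algebraic reductive group $H \supseteq G$, the product $\mathbb R^* I \cdot H$ is real algebraic and reductive, and $\tilde G$ has finite index in it; thus $\tilde G$ is a real linear reductive group in the sense of Section~2. Running the construction of Section~2 on $\tilde G$ produces the complexification $\tilde G^\mathbb C = \mathbb C^* I \cdot \GC$, since $\mathbb R^*$ is Zariski dense in $\mathbb C^*$ and Zariski closures behave well with respect to commuting products.

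With this enlargement, $\tilde G \cdot v = \mathbb R^* \cdot Gv$ and $\tilde G^\mathbb C \cdot v = \mathbb C^* \cdot \GC v$, so Proposition \ref{observation on boundaries in V}(c) applied to $\tilde G$ yields
$$\mathbb C^* \cdot \GC v \ \cap \ \overline{\mathbb R^* \cdot Gv} \ = \ \mathbb R^* \cdot Gv.$$
Since $\mathbb R^* \cdot \GC v \subseteq \mathbb C^* \cdot \GC v$, intersecting this inclusion with $\overline{\mathbb R^* \cdot Gv}$ gives the $V^\mathbb C$-form of the lemma; the reverse containment is immediate from $G \subseteq \GC$, and pushing forward by $\pi$ completes the proof.

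The main obstacle is bookkeeping rather than content: one must verify that $\tilde G$ and $\tilde G^\mathbb C$ really are instances of the Section~2 setup so that Proposition \ref{observation on boundaries in V}(c) applies. This reduces to the centrality and algebraicity of scalar matrices together with the identification of the Zariski closure of $\mathbb R^*$ with $\mathbb C^*$; once these are in place, the lemma falls out directly.
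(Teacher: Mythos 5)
Your strategy is essentially the paper's: enlarge the group by scalars so that the affine cones over the projective orbits become honest group orbits, and then quote Proposition \ref{observation on boundaries in V}(c) for the enlarged pair. The paper uses the external product $\mathbb{R}^*\times G$, whose complexification is immediately $\mathbb{C}^*\times \GC$, and passes between $\rpvc$ and $V^\mathbb{C}$ by normalizing a convergent sequence of representatives; you use the internal product $\mathbb{R}^*I\cdot G\subseteq GL(V)$ and the openness of $V^\mathbb{C}\setminus\{0\}\to\rpvc$ (so preimage commutes with closure). Both reductions are valid, and your final step (intersect the conclusion of Proposition \ref{observation on boundaries in V}(c) with $\mathbb{R}^*\cdot\GC v$) is exactly right.

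One assertion in your bookkeeping is false as stated: the product $\mathbb{R}^*I\cdot H$ of commuting real algebraic groups need not be real algebraic. For instance $\mathbb{R}^*I\cdot SL_2(\mathbb{R})=GL_2^+(\mathbb{R})$, whose Zariski closure in $GL_2(\mathbb{R})$ is all of $GL_2(\mathbb{R})$. This does not sink the argument, because Section 2 only requires $\tilde G$ to be pinched between a real algebraic reductive group and its identity component: take $\tilde H$ to be the group of real points of the complex algebraic group $\mathbb{C}^*I\cdot\overline{H}$ (working with $\rho(G)$, $\rho(H)$ in $GL(V)$, another point you gloss over); since $\exp(cI+Y)=e^c\exp Y$, one gets $\tilde H_0\subseteq \mathbb{R}^*I\cdot \rho(G)\subseteq\tilde H$, so $\tilde G$ is a real linear reductive group in the paper's sense. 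Moreover you do not need to identify $\tilde G^\mathbb{C}$ exactly as $\mathbb{C}^*I\cdot\GC$: the containments $\overline{H}_0\subseteq\overline{\tilde H}_0$ and $G\subseteq\tilde G$ already give $\mathbb{R}^*\cdot\GC v\subseteq\tilde G^\mathbb{C}\cdot v$, which is all your intersection argument uses. Alternatively, the paper's external product $\mathbb{R}^*\times G$, acting on $V$ by $(r,g)\cdot w=r\rho(g)w$, sidesteps both issues at once; with either repair your proof is complete.
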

This is the same result in projective space that we had for our
vector spaces.
\begin{proof}
The actions of $\mathbb R^* \times G$ and $G$ on $\rpv$ are the same; moreover, $(\mathbb R^* \times G)^\mathbb C = \mathbb C^*\times G^\mathbb C $.  Given $v\in V$  take
$g_n\in G$ and $g\in \GC$ such that $[g_n v]\to [g v]$ in $\rpv$.  Then we want
to show $[gv]\in G\cdot [v]$. Now take $r_n,r \in \mathbb R$ such
that $r_n g_n v, r g v$ have unit length in $V^\mathbb C$. We can
assume $r_n g_n v \to r g v$ by passing to $-r$ and a
subsequence if necessary.\\
Then $r_n g_nv \to rgv \in \mathbb C^* \times \GC \cdot v \cap
\overline{\mathbb R^* \times G\cdot v}$.  Therefore, $rgv\in \mathbb
R^* \times G\cdot v$ using Proposition \ref{observation on boundaries in V}c and our result follows.
\end{proof}

\section{Closed and Distinguished Orbits}
We begin with a theorem of Richardson and Slodowy (for real groups) which follows the work of Kempf and Ness (for complex groups).  To find which orbits are closed, one looks for the infimum of $|g\cdot v|^2$ along the orbit.  Such a vector is called a \textit{minimal vector} and  it occurs on the orbit precisely when our orbit is closed.  Let $\mathfrak M$ denote the set of minimal vectors in $V$.  Although not stated using the moment map, the following was proven in \cite[Theorem 4.4, 7.3]{RichSlow}.

\begin{thm}\label{closed orbit thm} $\Gv$ is closed if and only if there exists $w\in
\Gv$ such that $\tilde m (w)=0$.  Such a vector $w$ is  minimal.  Moreover, $\mathfrak M = \tilde m ^{-1}(0)$ and $\overline{G\cdot v}\cap \mathfrak M $ is a single $K$-orbit.  \end{thm}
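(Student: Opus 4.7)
The plan is to follow the Kempf--Ness strategy in the real category: identify minimal vectors with zeros of $\tilde m$ via a critical-point computation, upgrade ``critical point'' to ``global minimum on the orbit'' using convexity along $\p$-directions, and then use properness of $|\cdot|^2$ on $V$ to pin down closedness.

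First I would differentiate $\phi_v(g):=|g\cdot v|^2$. By $G=K\exp(\p)$ and the $K$-invariance of the inner product on $V$, criticality in the $\lk$-directions is automatic (since $\rho(\lk)$ is skew-symmetric), so only $\p$-directions matter. For $X\in\p$, the symmetry of $\rho(X)$ from Proposition \ref{prop: inner prod on V} yields
\[
\ddtat |\exp(tX) w|^2 \;=\; 2\langle Xw,w\rangle \;=\; 2\ll \tilde m(w),X\gg,
\]
so $w$ is critical for $\phi_v$ iff $\tilde m(w)=0$. Diagonalizing $\rho(X)$ with eigenvalues $\lambda_i$ and decomposing $w=\sum w_i$ accordingly gives
\[
t\mapsto |\exp(tX)w|^2 \;=\; \sum_i e^{2t\lambda_i}|w_i|^2,
\]
a nonnegative combination of exponentials, hence convex in $t$ and strictly convex unless $\rho(X)w=0$. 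Consequently every critical point of $\phi_v$ is a global minimum on the orbit $G\cdot v$, so $\mathfrak M\cap G\cdot v = \tilde m^{-1}(0)\cap G\cdot v$; moreover, if $w$ and $w'=k\exp(X)w$ are two such minima, strict convexity forces $\rho(X)w=0$, whence $\exp(X)\in G_w$ and $w'\in K\cdot w$.

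For the equivalence, the easy direction is closed $\Rightarrow$ minimal vector: if $G\cdot v$ is closed in $V$, then $|\cdot|^2$ restricted to it is proper and attains its infimum at some $w$, which is critical for $\phi_v$, hence $\tilde m(w)=0$. The converse, minimal vector $\Rightarrow$ closed orbit, is where I expect the main obstacle. Given $w\in G\cdot v$ with $\tilde m(w)=0$ and a sequence $g_n w\to u\in\overline{G\cdot w}$, I would write $g_n=k_n\exp(X_n)$ with $X_n\in\p$ and extract $k_n\to k$ by compactness of $K$; then $|\exp(X_n)w|^2\to |u|^2$, bounded. Since $w$ minimizes $\phi_w$, the function $t\mapsto|\exp(tX_n/\|X_n\|)w|^2$ is convex with minimum at $t=0$, and a quantitative use of the strict convexity above---handled by separating out the trivially-acting piece $\p\cap\g_w$---forces the $X_n$ to remain bounded. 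A subsequence then gives $\exp(X_n)\to\exp(X)$, so $u=k\exp(X)w\in G\cdot w$, and $G\cdot v$ is closed.

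Finally, $\mathfrak M=\tilde m^{-1}(0)$ follows by applying the orbit-wise identification of paragraph two to each orbit, and the claim that $\overline{G\cdot v}\cap \mathfrak M$ is a single $K$-orbit follows because any minimal $w'$ in that intersection sits in the closed orbit $G\cdot w'\subseteq\overline{G\cdot v}$; by the dimension argument of Proposition \ref{observation on boundaries in V}, $\overline{G\cdot v}$ contains a unique closed orbit, so all such $w'$ share one $G$-orbit, within which the convexity-uniqueness above collapses the minimal vectors into a single $K$-orbit.
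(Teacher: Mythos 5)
Your outline follows the Kempf--Ness/Richardson--Slodowy strategy (the paper itself gives no proof of this theorem, citing \cite[Theorems 4.4, 7.3]{RichSlow}), and its first half is sound: the computation that $w$ is critical for $g\mapsto|g\cdot w|^2$ iff $\tilde m(w)=0$, the convexity of $t\mapsto|\exp(tX)w|^2$ for $X\in\p$ (strictly convex unless $\rho(X)w=0$), hence critical $\Rightarrow$ global minimum on the orbit via $G=K\exp(\p)$, the identity $\mathfrak M=\tilde m^{-1}(0)$, the fact that minimal vectors inside one orbit form a single $K$-orbit, and the direction closed $\Rightarrow$ existence of a minimal vector. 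However, two steps have genuine gaps.

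First, in the direction ``minimal $\Rightarrow$ closed,'' the assertion that the $X_n$ in $g_n=k_n\exp(X_n)$ must remain bounded is false whenever $G_w$ is noncompact: taking $g_n\in G_w$ with unbounded $\p$-part gives $g_nw=w$ convergent while $\|X_n\|\to\infty$. Your proposed remedy, ``separating out the trivially-acting piece $\p\cap\g_w$,'' cannot be carried out by orthogonally decomposing $X_n=Y_n+Z_n$, because $\p$ is not a subalgebra and $\exp(Y_n+Z_n)\neq\exp(Y_n)\exp(Z_n)$, so the $\g_w$-component cannot simply be discarded. The convexity/limit-direction argument only shows that any unit limit $Y$ of $X_n/\|X_n\|$ satisfies $\rho(Y)w=0$, i.e.\ degeneration occurs asymptotically along the stabilizer; to conclude $u\in G\cdot w$ one must first replace $g_n$ by $g_nh_n$ with $h_n\in G_w$ so that the new $\p$-components lie in the orthogonal complement of $\p\cap\g_w$ inside $\p$. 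That requires Mostow's decomposition $G=K\exp\bigl(\p\ominus(\p\cap\g_w)\bigr)G_w$, which in turn uses the (nontrivial) fact that the stabilizer of a minimal vector is $\theta$-stable; this properness-modulo-stabilizer step is precisely the technical core of the Richardson--Slodowy proof and is missing from your sketch. Second, your proof of the final claim rests on ``$\overline{\Gv}$ contains a unique closed orbit, by the dimension argument of Proposition \ref{observation on boundaries in V}.'' That proposition only says boundary orbits have strictly smaller dimension, which yields \emph{existence} of a closed orbit in the closure (an orbit of minimal dimension there is closed) but not \emph{uniqueness}: nothing in the dimension count excludes two distinct closed orbits of equal, smaller dimension in the boundary. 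Uniqueness of the closed orbit in a real orbit closure is itself a substantive theorem of \cite{RichSlow} (proved via the complexification and separation of closed orbits, not by dimensions), so the statement that $\overline{\Gv}\cap\mathfrak M$ is a single $K$-orbit is not yet established by your argument.
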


Equivalently we could find the zero's of $||\tilde m ||^2$ to find
the minimal vectors.  Minimal vectors are used to understand the \textit{semi-stable points}, that is, all the vectors whose orbit closure does not contain zero.  In contrast, the \textit{null cone} is the set of vectors whose orbit closure does contain zero.  To study the null cone, we move to projective space.  Clearly we cannot use minimal vectors to study the geometry of the null cone, so instead of looking for zeros of $||m||^2$ on $\rpv$ we look for critical points of $||m||^2$.

\begin{defin}  We say that $v\in V$ or $[v]\in \rpv$ is distinguished if $||m||^2:\rpv \to \mathbb R$ has a critical point at $[v]$.  We say that an orbit $G\cdot v$ or $G\cdot [v]$ is distinguished if it contains  a distinguished point.  Analogously, we define distinguished points and $\GC$-orbits in $\VC$ and $\cpvc$ using $\nmus$.
\end{defin}

Minimal vectors are distinguished as zero is an absolute minimum of the function $||m||^2$.  Our goal is to find an analogue of Theorem \ref{BHC: real orbits from complex} for distinguished orbits. To understand critical points of $||m||^2$, we will find a way to relate this function to $||\mu^*||^2$ by means of $||n||^2$.  Recall that $||\mu^*||^2$ has been studied extensively in \cite{Ness,Kirwan}.

Our first observation is that the only closed orbits $G\cdot [v]
\subseteq \rpv$ occur when $G\cdot [v] = K\cdot [v]$.  This is well
known, but an elegant and geometric proof is easily obtained using properties of the moment map; see, e.g., \cite[Theorem 1]{Marian}.  So our main interest is in the remaining distinguished orbits.

\begin{prop}\label{critical points of \nns iff for \nms} If $[v]\in \rpv$, then $grad\ \nns [v] = grad\ \nms [v] \in T_{[v]}G\cdot [v]$.  Hence, $\nns$ has a critical point at $[v]\in \rpv \subseteq \rpvc$ if and only if $\nms$ does so.  Moreover, if $[v]\in \rpv$, and $\varphi_t [v]$ is the integral curve of $-grad\ \nns$ starting at $[v]$, then $\varphi_t[v] \in G\cdot [v] \subseteq \rpv$ for all $t$.
\end{prop}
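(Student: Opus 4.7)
The plan is to derive both gradients from the standard moment-map gradient identity and then use Lemma \ref{lemma: n|pv = m} to identify them. For any moment map $\mu$ of a group action on a Riemannian manifold with compatible inner products, one has the identity
\[
\text{grad}\,\|\mu\|^2_x = 2(\mu(x))^{\#}_x ,
\]
where $Y^{\#}_x = \ddtat [\exp(tY)\cdot v]$ for any $v$ lying over $x$. I would derive this at the outset by differentiating $\|\mu\|^2$ in a tangent direction $Y^{\#}_x$ and invoking the defining relation of the moment map, together with the fact that $Y^{\#}_x$ for $Y$ in the Lie algebra of the maximal compact is orthogonal to $Y^{\#}_x$ for $Y$ in the complementary (symmetric) subspace at any fixed base point. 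Applied to $G \actingon \rpv$ with metric from $\langle\cdot,\cdot\rangle$ and to $\GC \actingon \rpvc$ (viewed as a real group action) with metric from $S$, which are compatible by Proposition \ref{prop: extend inner product on V to VC}, this yields
\[
\text{grad}\,\nms[v] = 2(m[v])^{\#}_{[v]}, \qquad \text{grad}\,\nns[w] = 2(n[w])^{\#}_{[w]}.
\]

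Now fix $[v]\in \rpv$. By Lemma \ref{lemma: n|pv = m}, $n[v]=m[v]\in \p\subseteq \q$. Because $\p\subseteq \g$ preserves the real structure $V\subseteq \VC$, the Killing vector $(m[v])^{\#}_{[v]}$ lies in $T_{[v]}(G\cdot[v]) \subseteq T_{[v]}\rpv \subseteq T_{[v]}\rpvc$. Comparing the two displayed formulas at $[v]$ then gives the first assertion $\text{grad}\,\nns[v] = \text{grad}\,\nms[v] \in T_{[v]}(G\cdot[v])$. The equivalence of critical points is immediate since both gradients vanish together.

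For the flow statement, the argument above applies at \emph{every} point $[w] \in \rpv$: the vector $\text{grad}\,\nns[w] = 2(m[w])^{\#}_{[w]}$ is tangent to the orbit $G\cdot[w]\subseteq \rpv$. In particular, if $[v]\in \rpv$ and we restrict the smooth vector field $-\text{grad}\,\nns$ to the immersed submanifold $G\cdot[v]\subseteq \rpvc$, the restriction is tangent to $G\cdot[v]$. Since a $G$-orbit is an initial submanifold, by uniqueness of integral curves the flow line $\varphi_t[v]$ must remain in $G\cdot[v]\subseteq \rpv$ for all $t$ in the domain of $\varphi$.

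The only non-routine point is setting up the moment-map gradient identity for $n$ carefully, since $\GC$ here is being treated as a real Lie group with Cartan complement $\q = i\lk\oplus \p$ rather than as a complex group; once this identity is verified in the real sense (using the $U$-invariant metric from $\mathbb{S}$ and the definition of $n$), the remainder of the proof is a direct application of Lemma \ref{lemma: n|pv = m} together with standard ODE uniqueness.
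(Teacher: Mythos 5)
Your route is the same as the paper's: the paper proves the proposition by combining Lemma~\ref{lemma: grad m2= 4X m(x)} (the gradient formula), Lemma~\ref{lemma: n|pv = m}, and the fact that $m[v]\in\p\subseteq\g$, which is exactly your scheme of deriving the gradient identity and then identifying the two gradients along $\rpv$. Two small corrections to your set-up. First, with the paper's normalization of $m$ and $n$ the constant in the gradient formula is $4$, not $2$ (this is Lemma~\ref{lemma: grad m2= 4X m(x)}); the discrepancy is harmless for your argument since the same constant occurs for both $\nms$ and $\nns$, but the ``standard identity'' as you quote it is not what holds under these conventions. Second, the auxiliary fact you invoke in the derivation --- that $Xv$ for $X\in\lk$ is orthogonal to $Yv$ for $Y\in\p$ at an arbitrary base point --- is false in general (already for $GL_2(\mathbb R)$ acting on $\mathbb R^2$) and is not needed: what the computation actually uses is that $\tilde m$ (resp.\ $\tilde n$) takes values in $\p$ (resp.\ $\q$) because $\lk$ (resp.\ $\lk\oplus i\p$) acts skew-symmetrically, and that elements of $\p$ (resp.\ $\q$) act symmetrically, so that $\ll \tilde m(v), d\tilde m_v(w)\gg = 2\langle \tilde m(v)v,w\rangle$.

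The genuine gap is in the flow statement. From ``$-grad\ \nns$ is tangent to the initial submanifold $G\cdot[v]$ along $G\cdot[v]$, hence by uniqueness of integral curves $\varphi_t[v]$ stays in $G\cdot[v]$ for all $t$'' you may only conclude invariance for as long as the integral curve of the \emph{restricted} field exists in the orbit; since $G\cdot[v]$ is in general not closed in $\rpv$, this leaves open the possibility that the flow line reaches a boundary orbit in finite time. The general principle you cite is false: the constant field $\partial_x$ on the plane is tangent to the open segment $\{(x,0)\,:\,0<x<1\}$, which is an initial submanifold, yet its flow exits it in finite time. Two standard ways to close the gap. (i) Your tangency statement holds at \emph{every} point of $\rpv$, i.e.\ for every $G$-orbit in $\rpv$; since the ambient flow on the compact space $\rpvc$ is complete and the orbits partition $\rpv$, applying your restricted-field/uniqueness argument at each time $t_0$ to the orbit of $\varphi_{t_0}[v]$ shows that the orbit of $\varphi_t[v]$ is locally constant in $t$, hence constant, so no finite-time jump to a boundary orbit can occur. (ii) Lift the flow to the group: solve $\dot g(t)=-4\,m(g(t)\cdot[v])\,g(t)$, $g(0)=e$, in $G$; the solution exists for all $t$ because $m$ is bounded on the compact space $\rpv$ and the coefficient lies in $\g$, and then $g(t)\cdot[v]$ is an integral curve of $-grad\ \nms$, hence by the first assertion of $-grad\ \nns$, so uniqueness gives $\varphi_t[v]=g(t)\cdot[v]\in G\cdot[v]$ for all $t$. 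Either repair makes your argument complete; the paper's own proof is silent on this point, asserting only that the third claim follows from the first.
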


Before proving the proposition, we study the gradients of these functions.
Let $\phi :G^\mathbb C\times V^\mathbb C \to V^\mathbb C$ denote the action of $G^\mathbb C$ on $V^\mathbb C$, and let $\phi_v : G^\mathbb C \to V^\mathbb C$ denote the induced map for every $v\in V^\mathbb C$.
 We define vector fields on $V^\mathbb C$ and $\rpvc$ as follows. On $V^\mathbb C$ we define
    $$\tilde X_\alpha (v) := d\phi_v (\alpha)=\ddtat exp \ t\alpha \cdot v $$
for $\alpha \in \gc$.  And on $\rpvc$
    $$X_\alpha [v] := \pi_* \tilde X_\alpha (v)$$
where $\pi : V^\mathbb C \to \rpvc$ is projection.  Note, this is well defined
as our action $G^\mathbb C \actingon V^\mathbb C$ is linear.

\begin{lemma}\label{lemma: grad m2= 4X m(x)} For $x\in \rpv$, $grad \ \nms (x)= 4 X_{m(x)}(x)$.  For $x\in \rpvc$, $grad \ \nns (x) = 4 X_{n(x)}(x)$. \end{lemma}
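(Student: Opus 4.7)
The plan is to work at a unit representative $v \in V$ with $|v|^2 = 1$ (so $m[v] = \tilde m(v)$), and to identify $T_{[v]}\rpv$ with $v^\perp \subseteq V$ via $\pi_*$ using the round metric induced from $\langle \cdot, \cdot \rangle$ on $V$. The strategy is simply to compute $d\nms|_{[v]}(\pi_* w)$ for $w \in v^\perp$, read off the gradient against the identified inner product, and match it to $4 X_{m(x)}(x)$.

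First I would expand
$$\nms([v+tw]) = \frac{\ll \tilde m(v+tw), \tilde m(v+tw) \gg}{|v+tw|^4}$$
and differentiate at $t=0$. The derivative of the denominator contributes $-4\langle v, w\rangle \cdot \nms([v])$, which vanishes whenever $w \perp v$, leaving
$$d\nms|_{[v]}(\pi_* w) \;=\; 2 \ll \tilde m(v),\, d\tilde m_v(w) \gg.$$
Next I would differentiate the defining identity $\ll \tilde m(v), X\gg = \langle Xv, v\rangle$ in $v$ to get
$$\ll d\tilde m_v(w), X \gg \;=\; \langle Xw, v\rangle + \langle Xv, w\rangle \;=\; 2 \langle Xv, w\rangle,$$
where the last equality uses the symmetry of $X \in \p$ on $V$ supplied by Proposition \ref{prop: inner prod on V}. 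Setting $X = m[v] \in \p$ and combining with the previous display yields
$$d\nms|_{[v]}(\pi_* w) \;=\; 4 \langle m[v] \cdot v,\, w \rangle.$$
For $w \in v^\perp$ the right-hand side equals $\langle 4\,\pi_*(m[v]\cdot v),\, w\rangle$, so under the identification $T_{[v]}\rpv \cong v^\perp$ the gradient is
$$grad\ \nms([v]) \;=\; 4\, \pi_*\bigl( m[v] \cdot v \bigr) \;=\; 4 X_{m[v]}([v]),$$
as claimed; note that $m[v] \in \p \subseteq \g$, so this gradient automatically lies in $T_{[v]}(G\cdot [v])$, which is exactly what is needed for Proposition \ref{critical points of \nns iff for \nms}.

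The second assertion is formally identical: one replaces $\tilde m, \p, \ll,\gg, \langle ,\rangle$ by $\tilde n, \q, \mathbb S, S$ throughout, invoking Proposition \ref{prop: extend inner product on V to VC} to the effect that elements of $\q$ act $S$-symmetrically on $\VC$. There is no substantive obstacle here; the only bookkeeping item is the normalization factor $|v|^{-4}$ hidden in $m[v]$ versus $\tilde m(v)$, and the clean way to dispose of it is exactly to choose $|v|=1$ and restrict variations to $w \perp v$ so that the derivative of $|v+tw|^4$ drops out at $t=0$.
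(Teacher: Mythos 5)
Your computation is correct and matches what the paper relies on: the paper gives no independent argument here, simply citing Marian's Lemma~2 for the statement about $\nms$ on $\rpv$ and noting that her proof carries over to $\nns$ on $\rpvc$, and that proof is exactly this direct differentiation of the defining identity for the moment map at a unit representative with variations orthogonal to $v$. So you have essentially reproduced the cited argument in self-contained form, including the correct handling of the symmetric action of $\p$ (resp.\ $\q$) and of the Fubini--Study identification $T_{[v]}\cong v^\perp$.
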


Marian proves the first statement for $\nms$ on $\rpv$, see \cite[Lemma 2]{Marian}.  Her proof carries over to obtain the statement for $\nns$ on $\rpvc$.

\begin{proof}[Proof of proposition \ref{critical points of \nns iff for \nms}]
The first assertion follows from Lemma \ref{lemma: grad m2= 4X m(x)}, Lemma \ref{lemma: n|pv = m}, and the fact that $m[v] \in \p \subseteq \g$ for $[v]\in \rpv$.  The second and third assertions follow immediately from the first.
\end{proof}

Next we relate the actions of our complex group $\GC$ on $\rpvc$ and $\cpvc$.  By Lemma \ref{lemma:mu circ pi=2n} we know $||\mu^* \circ \pi [v]||^2=4||n[v]||^2$ for $v\in  \VC$ and $\pi : \rpvc \to \cpvc$.  This shows that $||n||^2$ is not just $U$-invariant, it is also $U\times \mathbb C^*$-invariant.  We wish to relate the actions of $\GC$ on $\rpvc$ and $\cpvc$ by comparing their gradients from the natural Riemannian structures on these projective spaces.


\subsection*{The Riemannian structures and gradients on projective space}

Recall that projective space can be endowed with a natural Riemannian metric so that projection from the vector space is a Riemannian submersion.  This natural Riemannian metric is called the Fubini-Study metric and is defined as follows.  Take $\zeta_i \in T_{[w]}\mathbb {KP}(V^\mathbb C)$, where $\mathbb K=\mathbb R$ or $\mathbb C$.  Let $\Pi^\mathbb K :V^\mathbb C \to \mathbb {KP}(V^\mathbb C)$ be the usual projection and take $\xi_i \in T_w V^\mathbb C$ such that $\Pi_*^\mathbb K(\xi_i)=\zeta_i$.  The Fubini-Study metric on $\mathbb {KP}(V^\mathbb C)$ is defined by

$$(\zeta_1,\zeta_2)=\frac{(\xi_1,\xi_2)(w,w)-(\xi_1,w)(\xi_2,w)}{(w,w)}$$

One can naturally identify the tangent space $T_{\Pi^\mathbb K(w)} \mathbb {KP}(V^\mathbb C)$ with the orthogonal compliment of $\mathbb K-span<w>$ in $T_w V^\mathbb C$.  In our setting, we are using $S$, the extension of $<,>$ on $V$, as our inner product on $V^\mathbb C$.  Using these natural choices of Riemannian structures on $\rpvc$ and $\cpvc$ we see that $\pi :\rpvc \to \cpvc$ is also a Riemannian submersion.

We are interested in the negative gradient flow of the moment map.  Let $\varphi_t$ denote the negative gradient flow of $||n||^2$ on $\rpvc$ and $||\mu^*||^2$ on $\cpvc$.

\begin{defin}\label{def: omega limit set}  The $\omega$-limit set of $\varphi _t (p) \subseteq \rpvc$ is the set $\{ q\in \rpvc \ | \ \varphi_{t_n}(p) \to q \mbox{ for some sequence } t_n \to \infty \mbox{ in } \mathbb R \}$.  We denote this set by $\omega (p)$.
\end{defin}

Analogously, we can define the $\omega$-limit set of $\varphi_t(p)\subseteq \cpvc$ and we denote this set by $\omega (p)$ also.  It is easy to see that $\omega (p)$ is invariant under $\varphi_t$ for all $t$.\\

\textit{Remark}.  We observe that points in the $\omega$-limit set of a negative gradient flow are fixed points of the flow, that is, critical points of the given function.  In general this is not true for $\omega$-limit points associated to non-gradient flows.  We include a brief argument for the reader.

Consider $F:M\to \mathbb R$ and let $\varphi_t(p)$ denote the integral curve of $-grad\ F$ starting at $p\in M$.  Observe that $F$ is decreasing along $\varphi_t(p)$.  Suppose $\omega(p)$ is non-empty.  Then we can define $c=\displaystyle \lim_{t\to \infty} F(\varphi_t(p))$ to obtain $\omega(p) \subseteq F^{-1}(c)$.  Thus for $q\in \omega(p)$ we see that $\varphi_t(q) \subseteq F^{-1}(c)$.  Hence, $grad\ F (q) = 0$.  That is, points in the $\omega$-limit set of $-grad\ F$ are critical points for $F$.

\begin{prop}\label{prop: n vs. mu*, crit. pts.} Endow $\rpvc$ and $\cpvc$ with the Riemannian metrics so that the projections from $V^\mathbb C$  are Riemannian submersions.  Then the following are true for $[v]\in \rpvc$
\begin{enumerate}
    \item $4 \pi_* \ grad \ ||n||^2 [v] = grad\ ||\mu^*||^2 (\pi[v])$

    \item $[v]\in \rpvc$ is a critical point of $|| n||^2$
if and only if $\pi [v] \in \cpvc$ is a critical point of $||
\mu^*||^2$.

    \item $\varphi_t \circ \pi = \pi \circ \varphi_{4t}$, where $\varphi_t$ denotes the negative gradient flow of $||n||^2$ on $\rpvc$ or $||\mu^*||^2$ on $\cpvc$.

    \item $\pi(\omega([v]))=\omega(\pi[v])$, where $\omega (p)$ denotes the $\omega$-limit set of the negative gradient flow starting from $p$.
\end{enumerate}
\end{prop}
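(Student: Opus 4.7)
The whole proposition is driven by two facts already in hand: Lemma \ref{lemma:mu circ pi=2n} gives $\mu^*\circ \pi = 2n$, hence
\[
\|\mu^*\|^2 \circ \pi \;=\; 4\,\|n\|^2 \qquad \text{on } \rpvc,
\]
and by construction (both Fubini--Study metrics come from the same inner product $S$ on $V^\mathbb C$ via the requirement that the projection from $V^\mathbb C$ be a Riemannian submersion) the map $\pi:\rpvc\to\cpvc$ is itself a Riemannian submersion. I plan to derive (a)--(d) from these two inputs in this order.

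\emph{Step 1 (part (a)).} For any Riemannian submersion $\pi$ and any smooth $f$ on the base, the gradient of $f\circ\pi$ is horizontal and $\pi_*\,\mathrm{grad}(f\circ\pi) = \mathrm{grad}\,f\circ \pi$. Apply this to $f=\|\mu^*\|^2$ and use $\|\mu^*\|^2\circ\pi = 4\|n\|^2$ to get
\[
4\,\pi_*\,\mathrm{grad}\,\|n\|^2[v] \;=\; \mathrm{grad}\,\|\mu^*\|^2(\pi[v]).
\]
To justify the claim cleanly I will note that $\|n\|^2$ is constant along the fibers of $\pi$ (the fibers are $S^1$-orbits of the action $[v]\mapsto[e^{i\theta}v]$, and since $e^{i\theta}$ is $S$-unitary and commutes with the $\mathbb C$-linear action of $\q$, the defining identity $\mathbb S(\tilde n(v),Y)=S(Yv,v)$ is unchanged; scaling by $|v|$ handles homogeneity), so $\mathrm{grad}\,\|n\|^2$ is horizontal.

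\emph{Step 2 (part (b)).} One direction is immediate from (a): if $\mathrm{grad}\,\|n\|^2[v]=0$, then $\mathrm{grad}\,\|\mu^*\|^2(\pi[v])=0$. For the converse, if $\pi[v]$ is critical for $\|\mu^*\|^2$, then (a) says $\pi_*\,\mathrm{grad}\,\|n\|^2[v]=0$, so that vector is vertical; combined with the horizontality established in Step 1 it must vanish.

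\emph{Step 3 (part (c)).} Define $c(t):=\pi(\varphi_{4t}[v])$ on $\rpvc$. Differentiating and applying (a) yields
\[
c'(t) \;=\; 4\,\pi_*\bigl(-\mathrm{grad}\,\|n\|^2\bigr)(\varphi_{4t}[v]) \;=\; -\mathrm{grad}\,\|\mu^*\|^2(c(t)),
\]
with $c(0)=\pi[v]$, so by uniqueness $c(t)=\varphi_t(\pi[v])$.

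\emph{Step 4 (part (d)).} This is a direct consequence of (c) together with the continuity of $\pi$ and the compactness of $\rpvc$. If $\varphi_{t_n}[v]\to[w]$ with $t_n\to\infty$, then $\varphi_{t_n/4}(\pi[v]) = \pi(\varphi_{t_n}[v])\to \pi[w]$, giving $\pi(\omega([v]))\subseteq \omega(\pi[v])$. Conversely, given $[y]\in\omega(\pi[v])$ with $\varphi_{s_n}(\pi[v])\to[y]$, the sequence $\varphi_{4s_n}[v]$ in $\rpvc$ has a convergent subsequence, say to $[w]$; continuity of $\pi$ and (c) force $\pi[w]=[y]$ and $[w]\in\omega([v])$.

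\emph{Where the content is.} Parts (c) and (d) are formal once (a) is known, and (b) is a two-line consequence. So the only place where any thought is needed is Step~1, and within Step~1 the single nontrivial observation is that $\|n\|^2$ is fiber-constant (equivalently, $S^1$-invariant on $\rpvc$), which is what allows one to conclude horizontality of $\mathrm{grad}\,\|n\|^2$ and thereby promote the ``$\pi_*$-image is zero'' condition in (b) to ``the gradient itself is zero.'' No serious obstacle is expected.
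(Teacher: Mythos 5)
Your proposal is correct and follows essentially the same route as the paper: both rest on $\|\mu^*\|^2\circ\pi = 4\|n\|^2$ (Lemma \ref{lemma:mu circ pi=2n}) together with the fact that $\pi:\rpvc\to\cpvc$ is a Riemannian submersion, with (c) and (d) handled by the identical uniqueness-of-integral-curves and compactness arguments. The only cosmetic differences are that you invoke the general submersion identity $\pi_*\,\mathrm{grad}(f\circ\pi)=\mathrm{grad}\,f\circ\pi$ where the paper pairs against arbitrary tangent vectors directly, and you spell out the horizontality (fiber-constancy, i.e.\ $\mathbb C^*$-invariance of $\|n\|^2$) step that the paper only indicates with the phrase ``use the $\mathbb C^*$-invariance'' in the converse of (b).
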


\begin{proof}
Applying Lemma \ref{lemma:mu circ pi=2n} we have
    \begin{eqnarray*}
    4<grad || n||^2 [v], w_{[v]}> &=& 4 \ddtat ||
     n [v+tw] ||^2 \\
    &=& \ddtat || \mu ^* \pi [v+tw] ||^2 \\
    &=& <grad ||\mu ^*||^2 (\pi [v]), \pi _* w_{[v]}>
    \end{eqnarray*}

Since $\pi_*$ is a submersion we have that $\pi_*$ maps the horizontal subspace of $T_{[v]} \rpvc $ isometrically onto $T_{\pi [v]} \cpvc$ and part $a.$ is proven.  Thus if $[v]$ is a critical point for $|| n ||^2$, then $\pi [v]$ is  one for $||\mu^*||^2$.  To obtain the reverse direction use the $\mathbb C^*$-invariance of $||n||^2$.  This proves part $b$.

Proof of part $c$.  Let $[v]\in \rpvc$.  Consider the curve $\pi \circ \varphi_{4t} [v]$ in $\cpvc$.  This curve satisfies the following differential equation
    $$\frac{d}{dt} \pi \circ \varphi_{4t}[v] = \pi_* \ 4 (-grad \ ||n||^2)(\varphi_{4t}[v])=-grad\ \nmus (\pi \circ \varphi_{4t}[v])$$
That is, the curve $\pi \circ \varphi_{4t}[v]$ is the integral curve of the negative gradient flow of $\nmus$ starting at $\pi [v]$.  Thus, $\pi \circ \varphi_{4t}=\varphi_t \circ \pi$.

Proof of part $d$.  We will show containment in both directions.  Take $p\in \omega [v]$, then there exists a sequence of $t_n \to \infty$ such that $\varphi_{t_n}[v] \to p$ in $\rpvc$.  Using part $c$, we have $\varphi_{t_n/4}(\pi [v])=\pi \circ \varphi_{t_n}[v] \to \pi(p)$.  That is, $\pi (p) \in \omega (\pi [v])$, or $\pi (\omega [v]) \subseteq \omega(\pi[v])$.  To obtain the other direction, take $q\in \omega (\pi[v])$ and $t_n \to \infty$ so that $\varphi_{t_n} (\pi[v]) \to q$ in $\cpvc$.  Consider the set $\varphi_{4t_n}[v]$ in $\rpvc$.  Since $\rpvc$ is compact, we can find a limit point of this set and passing to a subsequence we may assume $\varphi_{4t_n}[v] \to p$.  Then $p\in \omega [v]$, $\pi (p)=q$ by (c) and we have shown $q\in \pi(\omega [v])$.  That is, $\omega(\pi[v])\subseteq \pi(\omega [v])$.
\end{proof}

We finish the section by stating our main theorem and some corollaries.

\begin{thm}\label{distinguished orbits thm}  Given $G \actingon V$, $\GC \actingon \VC$, and $[v]\in \rpv$ we have
    \begin{quote} $G\cdot [v]$ is a distinguished orbit in $\rpv$ if
    and only if $\GC \cdot \pi [v]$ is a distinguished orbit in $\cpvc$.
    \end{quote}  Here $\pi : \rpv \subseteq \rpvc  \to \cpvc$ is the usual projection.
\end{thm}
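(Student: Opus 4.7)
The plan is to combine the correspondence between the real and complex gradient flows in Proposition \ref{prop: n vs. mu*, crit. pts.} with Lemma \ref{lemma: in proj. orbits  GC closure cap G = G}, invoking the complex convergence result of Ness in the nontrivial direction.

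The forward direction is immediate. If $[w]\in G\cdot[v]\subseteq\rpv$ is a critical point of $\nms$, then by Proposition \ref{critical points of \nns iff for \nms} together with Lemma \ref{lemma: n|pv = m} it is also a critical point of $\nns$ viewed in $\rpvc$, so Proposition \ref{prop: n vs. mu*, crit. pts.}(b) shows that $\pi[w]$ is a critical point of $\nmus$. Writing $[w]=g\cdot[v]$ with $g\in G\subseteq\GC$ gives $\pi[w]=g\cdot\pi[v]\in\GC\cdot\pi[v]$, so $\GC\cdot\pi[v]$ is distinguished.

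For the converse, assume $\GC\cdot\pi[v]$ contains a critical point of $\nmus$, and let $\varphi_t$ denote the negative gradient flow of $\nns$ on $\rpvc$. By Proposition \ref{critical points of \nns iff for \nms} the trajectory $\varphi_t[v]$ remains in $G\cdot[v]\subseteq\rpv$ for all $t$, and by compactness of $\rpvc$ the $\omega$-limit set $\omega[v]$ is nonempty and contained in $\overline{G\cdot[v]}\cap\rpv$. Ness's convergence result (compare Theorem \ref{complex gradient flow thm}) gives $\omega(\pi[v])\subseteq\GC\cdot\pi[v]$, and combining this with Proposition \ref{prop: n vs. mu*, crit. pts.}(d) yields $\pi(y)\in\GC\cdot\pi[v]$ for every $y\in\omega[v]$. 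Any such $y$ is a critical point of $\nns$ by the remark following Definition \ref{def: omega limit set}, and hence a critical point of $\nms$ because $y\in\rpv$.

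It remains to promote $y$ from $\overline{G\cdot[v]}$ into $G\cdot[v]$ itself, and for this I apply Lemma \ref{lemma: in proj. orbits  GC closure cap G = G} to the enlarged reductive group $\tilde G=\mathbb R^*\times G$, with complexification $\tilde\GC=\mathbb C^*\times\GC$. The orbits of $\tilde G$ in $\rpv$ coincide with those of $G$, while the orbits of $\tilde\GC$ in $\rpvc$ are exactly the $\pi$-preimages of the $\GC$-orbits in $\cpvc$, so the condition $\pi(y)\in\GC\cdot\pi[v]$ is equivalent to $y\in\tilde\GC\cdot[v]$. Combined with $y\in\overline{\tilde G\cdot[v]}$, Lemma \ref{lemma: in proj. orbits  GC closure cap G = G} forces $y\in\tilde G\cdot[v]=G\cdot[v]$, as required. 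The main obstacle is the appeal to Ness's inclusion $\omega(\pi[v])\subseteq\GC\cdot\pi[v]$; without it the real $\omega$-limit point might sit on the boundary of the distinguished complex orbit, so that no real critical point lying in $G\cdot[v]$ itself could be extracted.
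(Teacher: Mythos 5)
Your proposal is correct and follows essentially the same route as the paper: forward direction via Propositions \ref{critical points of \nns iff for \nms} and \ref{prop: n vs. mu*, crit. pts.}, and for the converse, running the $\nns$-gradient flow inside $G\cdot[v]$, pulling back the complex orbit-collapse statement through Proposition \ref{prop: n vs. mu*, crit. pts.}(d), and landing back in $G\cdot[v]$ via Lemma \ref{lemma: in proj. orbits  GC closure cap G = G} applied to $\mathbb R^*\times G$ and $\mathbb C^*\times \GC$, exactly as the paper does. One attribution caveat: the key inclusion $\omega(\pi[v])\subseteq \GC\cdot\pi[v]$ is not a result of Ness (her Theorem 6.2 only gives $\mathfrak C\cap\GC\cdot x=U\cdot z$) but is precisely the new second statement of Theorem \ref{complex gradient flow thm}, proved later in the paper independently of Theorem \ref{distinguished orbits thm}, so your use of it is legitimate but should be credited to that theorem rather than to Ness.
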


\textit{Remark}.  Analysis of the proof of Theorem \ref{distinguished orbits thm} shows the following.  Given $v\in V \subseteq \VC$, the orbits $G\cdot [v] \subseteq \rpv$ and $\GC\cdot \pi [v] \subseteq \cpvc$ being distinguished is equivalent to $\GC \cdot [v] \subseteq \rpvc$ being distinguished using $\nns$ on $\rpvc$.

\begin{cor}\label{cor: real orbits which are complex equiv} Suppose we have $v_1, v_2 \in V$ with distinct
$G$-orbits but whose $\GC$-orbits are the same.  Then $G\cdot [v_1]$
is distinguished if and only if $G\cdot [v_2]$ is distinguished.\end{cor}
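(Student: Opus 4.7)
The plan is to deduce this corollary directly from Theorem \ref{distinguished orbits thm}; no new analysis is needed, since the hypothesis simply says that the two $G$-orbits project to the same $\GC$-orbit in $\cpvc$, reducing everything to a single distinguishedness question on the complex side.

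First I would observe that the assumption $\GC\cdot v_1 = \GC\cdot v_2$ in $\VC$ passes to projective space. Because the action of $\GC$ on $\VC$ is linear, it commutes with scaling by $\mathbb{C}^*$, so equality of orbits in $\VC$ gives $\GC\cdot [v_1] = \GC\cdot [v_2]$ in $\rpvc$, and applying the projection $\pi:\rpvc\to\cpvc$ then yields the equality $\GC\cdot \pi[v_1] = \GC\cdot \pi[v_2]$ in $\cpvc$.

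Next I would invoke Theorem \ref{distinguished orbits thm} for $v_1$ and $v_2$ separately: for each $i\in\{1,2\}$, the orbit $G\cdot[v_i]$ is distinguished in $\rpv$ if and only if $\GC\cdot\pi[v_i]$ is distinguished in $\cpvc$. Combined with the equality from the previous step, both of the left-hand conditions are equivalent to the same statement, namely that the common $\GC$-orbit $\GC\cdot\pi[v_1]=\GC\cdot\pi[v_2]\subseteq\cpvc$ is distinguished, which is exactly the claim.

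There is no real obstacle here; the entire content of the corollary is that distinguishedness of the real projective orbit $G\cdot[v]$ depends only on the $\GC$-orbit of $v$ in $\VC$, and this is the precise reason the main theorem is formulated via the projection $\pi$. The only line in the write-up that needs any care is the elementary descent from equality of $\GC$-orbits in $\VC$ to equality of $\GC$-orbits in $\cpvc$.
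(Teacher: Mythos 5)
Your proposal is correct and is exactly the intended argument: the paper states the corollary as an immediate consequence of Theorem \ref{distinguished orbits thm}, and your deduction (equality of $\GC$-orbits in $\VC$ descends to equality of $\GC$-orbits in $\cpvc$ by equivariance of the projections, then apply the theorem to $v_1$ and $v_2$ separately) is the same route.
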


\textit{Remark}.  The phenomenon of two vectors having different real orbits but the same complex orbit
happens often.  This corollary was a necessary ingredient in the solution to the problem of
showing that generic 2-step nilmanifolds admit soliton metrics.
See \cite{Jablo:Thesis}.  This corollary is also used to prove other interesting geometric results; see, e.g., Theorem \ref{thm: real forms of same complex group}.

\section{Proofs of Main Theorems}
Here we prove Theorem \ref{distinguished orbits thm} on
distinguished orbits.  To do this, we first prove a statement for
complex moment maps in the complex setting.  Then we will relate the
complex moment map information to the real moment map for the $\GC$
action.

\textit{Remark}.  For $x\in \cpvc$, the critical points of $||\mu^*||^2$ restricted to $G^\mathbb C \cdot x$ are precisely the critical points of $||\mu^*||^2$ as a function on $\cpvc$.  This is because $grad \ ||\mu^*||^2 (x)$ is always tangent to $G^\mathbb C \cdot x$.  We denote the set of critical points of $\nmus$ in $\cpvc$ by $\mathfrak C$.

\begin{thm}\label{complex gradient flow thm}  For $x\in \cpvc$, suppose $\GC \cdot x \subseteq \cpvc$ contains a critical point of $\nmus$.  If $z \in \mathfrak C \subseteq \cpvc$ is such a critical point, then $\mathfrak C \cap \GC \cdot x = U\cdot z$.
Moreover, $U\cdot z = \displaystyle{\bigcup_{g\in G^\mathbb C} } \omega (gx)$.
\end{thm}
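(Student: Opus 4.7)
The first equality $\mathfrak C \cap \GC \cdot x = U \cdot z$ is Ness's theorem \cite{Ness}, which is stated in the excerpt as known, so I would simply cite it. The new content is the identity $U \cdot z = \bigcup_{g \in \GC} \omega(g x)$, which I would prove as two inclusions.

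The inclusion $U \cdot z \subseteq \bigcup_g \omega(g x)$ is straightforward. Since $z \in \GC \cdot x$, pick $h \in \GC$ with $h x = z$. For any $u \in U$, set $g = u h$, so that $g x = u z$. Because $\nmus$ is $U$-invariant, $u z$ is itself a critical point of $\nmus$, hence a fixed point of the negative gradient flow. Thus $\varphi_t(g x) \equiv u z$ for all $t$, and $u z \in \omega(g x) \subseteq \bigcup_g \omega(g x)$.

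For the reverse inclusion $\bigcup_g \omega(g x) \subseteq U \cdot z$, fix $g \in \GC$ and $y \in \omega(g x)$. The vector field $\operatorname{grad}\nmus$ is everywhere tangent to $\GC$-orbits (the complex analogue of Lemma \ref{lemma: grad m2= 4X m(x)}, together with Proposition \ref{prop: n vs. mu*, crit. pts.}a), so the trajectory $\varphi_t(g x)$ stays inside $\GC \cdot x$ for all $t$; hence $y \in \overline{\GC \cdot x}$. By the remark immediately preceding the theorem, $y$ is a critical point of $\nmus$. It therefore suffices to upgrade $y \in \overline{\GC \cdot x}$ to $y \in \GC \cdot x$, for then $y \in \mathfrak C \cap \GC \cdot x = U \cdot z$ by Ness.

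The main obstacle is this last step: ruling out that $\varphi_t(g x)$ escapes in the limit to a critical point lying in the boundary $\overline{\GC \cdot x} \setminus \GC \cdot x$. Here I would invoke the Kempf-Ness/Ness convexity picture on the symmetric space. Fixing a representative $v \in \VC$ of $x$, the function $\log|g v|^2$ descends (modulo the scalar ambiguity from passing $\VC \to \cpvc$) to a geodesically convex function on the non-positively curved symmetric space $\GC/U$, and by Kempf-Ness and Ness the presence of a critical point of $\nmus$ on $\GC \cdot x$ is equivalent to this convex function attaining its infimum on a single totally geodesic orbit whose image in $\cpvc$ is exactly $U \cdot z$. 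Convexity on the complete, simply connected, non-positively curved manifold $\GC/U$ then forces the negative gradient flow from every starting point to converge into this minimum locus; pushing the assertion back down to $\cpvc$ gives that $\varphi_t(g x)$ converges to a point of $U \cdot z$, so in particular $\omega(g x) \subseteq U \cdot z$. The delicate point is verifying that the gradient flow of $\nmus$ on $\cpvc$ really is the image of the gradient flow of the convex function on $\GC/U$, so that convexity on the symmetric space controls the $\omega$-limits on $\cpvc$.
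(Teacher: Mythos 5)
Your first equality (citing Ness) and the easy inclusion $U\cdot z\subseteq\bigcup_{g}\omega(gx)$ are fine, and you have correctly isolated the real difficulty: showing that $\omega(gx)$ cannot land on critical points in $\overline{\GC\cdot x}\setminus\GC\cdot x$. But the tool you propose for this step does not apply. The Kempf--Ness convexity picture on $\GC/U$ characterizes \emph{zeros} of the moment map: the convex function $g\mapsto\log|gv|^2$ attains its infimum along the orbit if and only if $\GC\cdot v$ is closed in $\VC$, equivalently the orbit contains a minimal vector, i.e.\ the critical value of $\nmus$ is $0$. The theorem, however, is precisely about distinguished orbits whose critical value $\|\beta\|^2$ may be positive; such orbits typically lie in the null cone, where $\inf_g|gv|=0$ is \emph{not} attained and the Kempf--Ness function is unbounded below along the orbit. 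So the asserted equivalence ``critical point of $\nmus$ on $\GC\cdot x$ $\Leftrightarrow$ the convex function attains its infimum'' is false in exactly the cases the theorem is designed to cover, and there is no minimum locus on $\GC/U$ for convexity to force the flow into. Moreover, even in the stable case the step you flag as ``delicate'' is where the argument actually breaks: the negative gradient flow of $\nmus$ on $\cpvc$ is not the image of the gradient flow of the Kempf--Ness function on $\GC/U$ (the two flows live on different spaces and no conjugacy is established), so convexity upstairs does not by itself control $\omega$-limits downstairs.

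For comparison, the paper avoids any convexity argument and works entirely with Kirwan's Morse-theoretic stratification. Since the stratum $S_\beta$ containing $z$ is $\GC$-invariant, one gets for free that $\omega(gx)\subseteq C_\beta$ for every $g$; the remaining danger is that the limit sits in $C_\beta\cap(\overline{\GC\cdot x}\setminus\GC\cdot x)$. This is excluded by a local-to-global argument inside the orbit: a compact tube $A=U\exp(B)\cdot z$ around $U\cdot z$, Ness's theorem to show that the low-level sets $V_\varepsilon\subseteq S_\beta\cap\GC\cdot z$ near $U\cdot z$ stay inside $A$ and flow into $U\cdot z$, and then an open-and-closed (connectedness) argument showing that $\mathcal O=\{x\in\GC\cdot z:\omega(x)\subseteq U\cdot z\}$ is all of $\GC\cdot z$. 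If you want to salvage your outline, you would have to replace the Kempf--Ness function by the machinery adapted to the critical value $\beta$ (Kirwan's minimally degenerate Morse theory or a shifted functional), which essentially reproduces the paper's route rather than shortcutting it.
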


Let $\mathfrak C _\mathbb R$ denote the set of critical points of $\nms$ on $\rpv$.  We have a real analogue of the theorem above.

\begin{thm}\label{thm: real gradient flow thm} For $x\in \rpv$, suppose $G\cdot x \subseteq \rpv$ contains a critical point of $\nms$.  If $z\in \mathfrak C_\mathbb R \subseteq \rpv$ is such a critical point, then $\mathfrak C_\mathbb R \cap G\cdot x = K\cdot z$.  Moreover, $K\cdot z = \displaystyle \bigcup_{g\in G} \omega (gx)$.
\end{thm}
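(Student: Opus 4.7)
The first assertion $\mathfrak C_\mathbb R\cap G\cdot x=K\cdot z$ is \cite[Theorem 1]{Marian}; I focus on $K\cdot z=\bigcup_{g\in G}\omega(gx)$. The plan is to mirror the proof of Theorem \ref{complex gradient flow thm} in the real setting, shuttling between the three flows of $\nms$ on $\rpv$, $\nns$ on $\rpvc$, and $\nmus$ on $\cpvc$ via Proposition \ref{critical points of \nns iff for \nms} and Proposition \ref{prop: n vs. mu*, crit. pts.}, so as to apply Theorem \ref{complex gradient flow thm} as a black box.

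For $K\cdot z\subseteq\bigcup_{g\in G}\omega(gx)$: since $z\in G\cdot x$, write $z=g_0 x$ with $g_0\in G$. For any $k\in K$, the point $kz=(kg_0)\cdot x$ is itself a critical point of $\nms$, since $K$ acts on $\rpv$ by Fubini-Study isometries preserving $\nms$. Hence the negative gradient flow is stationary at $kz$, so $\omega((kg_0)\cdot x)=\{kz\}$ and $kz\in\bigcup_{g\in G}\omega(gx)$.

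For $\bigcup_{g\in G}\omega(gx)\subseteq K\cdot z$: let $p\in\omega(gx)$ for some $g\in G$. By the standard observation that $\omega$-limit points of a gradient flow are critical points, $p\in\mathfrak C_\mathbb R$. By Proposition \ref{critical points of \nns iff for \nms}, the $\nms$-flow on $\rpv$ starting at $gx$ coincides with the $\nns$-flow on $\rpvc$ starting at $gx$, so $p$ is also in the $\omega$-limit of that flow. Proposition \ref{prop: n vs. mu*, crit. pts.}(d) yields $\pi(p)\in\omega(\pi(gx))$ in $\cpvc$, and since $\pi(z)\in\mathfrak C\cap\GC\cdot\pi(x)$ (by Proposition \ref{prop: n vs. mu*, crit. pts.}(b) and $z\in G\cdot x$), Theorem \ref{complex gradient flow thm} delivers $\omega(\pi(gx))\subseteq U\cdot\pi(z)$; hence $\pi(p)\in U\cdot\pi(z)$.

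The main obstacle is then to pass from the complex statement $\pi(p)\in U\cdot\pi(z)$ back to $p\in K\cdot z$. The key elementary observation is that $\pi|_{\rpv}:\rpv\to\cpvc$ is injective: if $v,w\in V$ are nonzero with $\mathbb Cv=\mathbb Cw$ in $\VC$, then $w=\alpha v$ for some $\alpha\in\mathbb C^*$, and the reality of $v$ and $w$ forces $\alpha\in\mathbb R^*$. Thus $p$ is pinned down by $\pi(p)$, and it suffices to show $\pi(p)\in K\cdot\pi(z)$, equivalently $U\cdot\pi(z)\cap\pi(\rpv)\subseteq K\cdot\pi(z)$. To establish this inclusion I plan to combine $U=KU_0$ (from the remark after Proposition \ref{prop: extend inner product on V to VC}) with complex conjugation $\sigma$ on $\VC$ fixing $V$ and on $\GC$ fixing $G$: writing $uz=\alpha v$ with $v\in V$ and applying $\sigma$ gives $\sigma(u)z=\bar\alpha v$, whence $u^{-1}\sigma(u)\cdot z=(\bar\alpha/\alpha)z$, which forces $u^{-1}\sigma(u)$ into the $\GC$-stabilizer of $\pi(z)$. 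A standard fixed-point analysis for the $\sigma$-action on $U/U_{\pi(z)}$ (using $K=U^\sigma$) then produces $k\in K$ with $k\pi(z)=u\pi(z)=\pi(p)$, and injectivity of $\pi|_{\rpv}$ yields $p=kz\in K\cdot z$.
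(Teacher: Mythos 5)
Your first inclusion $K\cdot z\subseteq\bigcup_{g\in G}\omega(gx)$ and the forward passage to the complex side (each $p\in\omega(gx)$ is a critical point of $\nms$ and $\pi(p)\in\omega(\pi(gx))\subseteq U\cdot\pi(z)$ by Proposition \ref{critical points of \nns iff for \nms}, Proposition \ref{prop: n vs. mu*, crit. pts.} and Theorem \ref{complex gradient flow thm}) are correct and parallel to the paper. The gap is the return trip from $\pi(p)\in U\cdot\pi(z)$ to $p\in K\cdot z$. Injectivity of $\pi|_{\rpv}$ is fine, but the inclusion it reduces you to, $U\cdot\pi(z)\cap\pi(\rpv)\subseteq K\cdot\pi(z)$, is false in general, so no fixed-point analysis of the $\sigma$-action on $U/U_{\pi(z)}$ can produce the desired $k$: the $\sigma$-fixed cosets form, in general, several $K$-orbits, indexed by the classes of $H^1(\langle\sigma\rangle,U_{\pi(z)})$ that die in $H^1(\langle\sigma\rangle,U)$, and your cocycle $u^{-1}\sigma(u)\in U_{\pi(z)}$ may represent a nontrivial such class. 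Concretely, let $\GC=\{\mathrm{diag}(t,t^{-1}):t\in\mathbb C^*\}$ act on $\VC=\mathbb C^2$, so $G=\GC(\mathbb R)\cong\mathbb R^*$, $K=\{\pm I\}$, $U\cong U(1)$, and take $z=[(1,1)]\in\rpv$, a minimal vector ($\tilde m=0$), hence a critical point of $\nms$. Then $u=\mathrm{diag}(i,-i)\in U$ sends $(1,1)$ to $i(1,-1)$, so $u\cdot\pi(z)=\pi[(1,-1)]\in U\cdot\pi(z)\cap\pi(\rpv)$, yet $[(1,-1)]\notin K\cdot z=\{z\}$; in fact $[(1,-1)]\notin G\cdot z$ at all (the sign of the product of coordinates separates the two real projective orbits), even though it is itself a critical point. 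This is exactly the Borel--Harish-Chandra phenomenon the paper is built around: the real points of the single complex orbit $\GC\cdot\pi(z)$ split into finitely many $G$-orbits, each distinguished one carrying its own $K$-orbit of critical points, so reality of $\pi(p)$ together with $\pi(p)\in U\cdot\pi(z)$ cannot force $p\in K\cdot z$. (Also, $K=U^\sigma$ is only guaranteed when $G=\GC(\mathbb R)$; for a proper finite-index subgroup one only has $K\subseteq U^\sigma$, but that is secondary.)

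What your argument never uses, and what is actually needed, is that $p$ lies in the real orbit $G\cdot x$ itself; this is how the paper closes the loop. By Proposition \ref{critical points of \nns iff for \nms} the flow starting at $gx$ stays in $G\cdot x\subseteq\rpv$, so $\omega(gx)\subseteq\overline{G\cdot x}$; from $\pi(\omega(gx))\subseteq U\cdot\pi(z)$ one gets $\omega(gx)\subseteq\pi^{-1}(U\cdot\pi(z))=\mathbb C^*\times U\cdot[w]\subseteq\mathbb C^*\times\GC\cdot x$ in $\rpvc$, and then Lemma \ref{lemma: in proj. orbits  GC closure cap G = G} (applied to $\mathbb R^*\times G$, whose complexification is $\mathbb C^*\times\GC$; it rests on Proposition \ref{observation on boundaries in V}c, i.e., the finite-union theorem) gives $\omega(gx)\subseteq(\mathbb C^*\times\GC\cdot x)\cap\overline{\mathbb R^*\times G\cdot x}=G\cdot x$. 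Hence $\omega(gx)$ consists of critical points of $\nms$ lying in $G\cdot x$, and the first statement of the theorem (Marian's Theorem 1) then yields $\omega(gx)\subseteq\mathfrak C_\mathbb R\cap G\cdot x=K\cdot z$. So keep your forward half, but replace the conjugation/fixed-point step by this orbit-closure argument -- equivalently, cite the proof of Theorem \ref{distinguished orbits thm}, which already establishes $\omega(gx)\subseteq G\cdot x$; that is precisely the paper's proof.
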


Before proving Theorems \ref{complex gradient flow thm} and \ref{thm: real gradient flow thm}, we apply Theorem \ref{complex gradient flow thm} to prove Theorem \ref{distinguished orbits thm}. \\

\textbf{Proof of Theorem \ref{distinguished orbits thm}.}
Suppose first that $G\cdot [v]$ is distinguished.  Then $G\cdot[v]=G\cdot [w]$ where $[w]$ is a critical point of $\nms$.  But now Proposition \ref{critical points of \nns iff for \nms} implies that $[w]$ is a critical point of $\nns$ and Proposition \ref{prop: n vs. mu*, crit. pts.} implies that $\pi [w]$ is a critical point of $\nmus$; that is, $\GC \cdot \pi [v]$ is distinguished.

Now suppose $\GC \cdot \pi[v]$ is distinguished.  Our goal is to show that the orbit $G\cdot [v]$ in $\rpv$ contains a critical
point of $\nms$.  We will use the $\GC$ action on $\rpvc$ and the real moment map of this action.
As $\GC \cdot \pi [v]$ is distinguished, and $\pi :\GC \cdot [v] \to \GC \cdot \pi [v]$ is surjective, there exists $w\in \GC \cdot [v]$ such that $\pi [w] \in \GC \cdot \pi [v]$ is a critical point of $\nmus$.

Apply the negative gradient flow of $\nns$ in $\rpvc$ starting at $[v]\in \rpv$.  By Proposition \ref{critical points of \nns iff for \nms} this is the negative gradient flow of $\nms$ and the $\omega$-limit set $\omega [v] \subseteq \overline {G\cdot [v]}$ consists of critical points of $\nns$ and $\nms$ (see the remark following Definition \ref{def: omega limit set}).  By Proposition \ref{prop: n vs. mu*, crit. pts.} d and Theorem \ref{complex gradient flow thm}, we have $\pi(\omega [v]) = \omega (\pi [v]) \subseteq U\cdot \pi [w]$; hence, $\omega [v] \subseteq \pi^{-1}(U\cdot \pi [w] )= \mathbb C^*\times U\cdot [w] \subseteq \mathbb C^*\times G^\mathbb C\cdot [v]$.  This implies

$$ \omega [v] \subseteq \mathbb C^* \times G^\mathbb C\cdot [v] \cap \overline {G\cdot [v]} \subseteq
    \mathbb C^*\times G^\mathbb C\cdot [v] \cap \overline{\mathbb R^*\times G\cdot [v]} = \mathbb R^*\times G\cdot [v] =G\cdot [v]$$
by Lemma \ref{lemma: in proj. orbits  GC closure cap G = G} and the fact that $(\mathbb R^*\times G)^\mathbb C = \mathbb C^*\times G^\mathbb C$.  Hence $\omega [v]$ consists of critical points of $\nms$ that lie in $G\cdot [v]$.  This proves Theorem \ref{distinguished orbits thm}.\\

Before proving Theorem \ref{complex gradient flow thm}, we prove Theorem \ref{thm: real gradient flow thm}.  The proof of this theorem is actually embedded in the proof of Theorem \ref{distinguished orbits thm}.  We present it here.\\

\textbf{Proof of \ref{thm: real gradient flow thm}}  The fact that $\mathfrak C_\mathbb R \cap G\cdot x$ constitutes a single $K$-orbit is the content of \cite[Theorem 1]{Marian}.  In \cite{Marian} $G$ is taken to be semi-simple; however, all the results hold for $G$ real reductive with the same proofs, mutatis mutandis.  Our original contribution is the second statement of the theorem.  We prove it here.

Suppose $G\cdot x \subseteq \rpv$ contains a critical point $z$ of $\nms$.  Then the orbit $\GC \cdot \pi (x)$ is distinguished in $\cpvc$ by Theorem \ref{distinguished orbits thm}.  The proof of Theorem \ref{distinguished orbits thm} shows, for $g\in G$, $\omega (gx)$ consists of critical points of $\nms$ in $G\cdot x$.  By Theorem 1 of \cite{Marian}, we have $\omega (gx) \subseteq K\cdot z$.  Hence,
$\displaystyle \bigcup_{g\in G}\omega (gx) = K\cdot z$, since $\omega (y)= \{ y \}$ for all $y\in K\cdot z$.\\

Lastly we have to prove Theorem \ref{complex gradient flow thm}.  The first
statement is proven in \cite[Theorem 6.2]{Ness}.  That is, the critical points of $||\mu^*||^2$ on a $\GC$-orbit comprise a single $U$-orbit.  As in Theorem \ref{thm: real gradient flow thm}, our original contribution is the second statement.

 The statement that the whole orbit $G^\mathbb C\cdot x$
flows to one $U$-orbit $U\cdot z$ is plausible, but is not contained in
Kirwan's work \cite{Kirwan}.  It is a finer statement than the $\GC$-invariance of Kirwan's stratification of $\cpvc$. There are two problems to be aware of: first, for $g\in G^\mathbb C$, $\omega(gx)$ might be a set with more than one point and, second, there is no reason to expect that $\omega(gx)$ lies entirely in the orbit $G^\mathbb C\cdot x$.
  This
proof is just for the complex setting of our complex group $\GC
\actingon \cpvc$.  This is the setting of Kirwan and Ness.

\textit{Remark.}  It has been pointed out to me by Jorge Lauret that the set $\omega(x)$ consists of a single point, see section 2.5 of \cite{Sjamaar:ConvexityofMomentMapping}.  However, our proof does not require the use of this fact.\\

\textbf{Proof of \ref{complex gradient flow thm}}
Consider an orbit $\GC \cdot y$ which is distinguished and let $z
\in \GC\cdot y$ be a critical point.  Let $x$ be any point in $\GC
\cdot y$.  To show that $\omega (x) \subseteq U\cdot z$, we will first show
that the limit set $\omega (x)$ intersects $U\cdot z$ and then show
containment.  First we need to recall some results from Kirwan's
work \cite{Kirwan}.

We have a smooth stratification of $\cpvc$ into strata $S_\beta$
which are $\GC$-invariant.  The strata are determined by a certain decomposition of the critical set $\mathfrak C$ of $||\mu^*||^2$ in $\cpvc$.  This critical set is a finite union $\mathfrak C = \bigcup _{\beta \in B} C_\beta$ where $||\mu^*||^2$ takes a constant value on $C_\beta$ and each $C_\beta$ is $U$-invariant.  We will denote this constant value of $||\mu^*||^2$ on $C_\beta$ by $M_\beta =||\beta||^2$; here $B$ is actually a finite set in $\gc$ and the norm $||\cdot||$ comes from the prescribed inner product on $\gc$.

For $\beta \in B$, the stratum $S_\beta$ is defined to be the set of points which flow via the negative gradient
flow to the critical set $C_\beta$, that is, $S_\beta=\{ x\in \cpvc | \ \omega (x) \subseteq C_\beta  \}$.  In particular, $C_\beta \subseteq S_\beta$.  See section 2 of \cite{Kirwan} for a detailed discussion of this Morse Theory approach to Geometric Invariant Theory.  If $\GC\cdot y \cap C_\beta
\neq \emptyset$ then
$$ \GC\cdot y \cap C_\beta = U\cdot z$$
for $z \in C_\beta$, that is, the critical points in a $G^\mathbb C$-orbit comprise a single $U$-orbit, see \cite[Theorem 6.2]{Ness}.  We show two things.  First, if $x\in G^\mathbb C \cdot z$ is in a neighborhood of $U\cdot z$, then $\omega (x) \subseteq U\cdot z$.  Second, this neighborhood of $U\cdot z$ in $G^\mathbb C\cdot z$ should be the entire orbit; that is, $\omega (x) \subseteq U\cdot z$ for all $x\in G^\mathbb C \cdot z$.  The first is a little more obvious but does rely on
the fact that our moment map is a minimally degenerate Morse
function, see definition 10.1 of \cite{Kirwan}.  That fact that $||\mu^*||^2$ is a minimally degenerate Morse function can be found in section 4 of \cite{Kirwan}.

Fix $\beta$.  We will be interested in $z\in C_\beta$ and the orbit $G^\mathbb C \cdot z$. We define $\mathcal O_\varepsilon = \{ x\in \cpvc \ | \ ||\mu^*||^{2}(x) \in [\ M_\beta,M_\beta+\varepsilon) \} \cap S_\beta$.  This is an open subset of $S_\beta$ that contains $C_\beta = \{ x\in S_\beta \ | \ ||\mu^*(x)||^2= M_\beta \}$.  We observe that $\mathcal O_\varepsilon$ is invariant under the forward flow $\varphi _t$ of $-grad \ \nmus $ as $||\mu^*||^2$ decreases along the trajectories $t\to \varphi_t (x)$.  Since $\GC \cdot z$ is a submanifold of $\cpvc$, hence also of $S_\beta$, $\mathcal O_\varepsilon \cap \GC \cdot z$ is open in $\GC \cdot z$ and contains $U\cdot z$ as $C_\beta$ is $U$-invariant.

\begin{defin} We define $\{ V_{\varepsilon,i} \}$ to be the collection of connected components of $\mathcal O_\varepsilon \cap \GC \cdot z$ that intersect $U\cdot z$.  We define $V_\varepsilon:= \displaystyle \bigcup_i V_{\varepsilon,i}$. \end{defin}

\textit{Remark}.  $V_\varepsilon$ is an open set of $G^\mathbb C\cdot z$ that contains $U\cdot z$. As $U$ has finitely many components, $U=\displaystyle \bigcup_{i=1}^m \phi_i U_0 $ and we can write $V_\varepsilon =\displaystyle \bigcup_{i=1}^m V_{\varepsilon, i}$ where $\phi_i U_0(z) \subseteq V_{\varepsilon,i}$.  The $V_{\varepsilon,i} $ are connected and open in $\GC \cdot z$ as $\mathcal O_\varepsilon \cap \GC\cdot z$ is open in $\GC \cdot z$ and $\GC \cdot z$ is locally connected, see \cite[Theorem 25.3]{Munkrees}. Moreover, since $\mathcal O_\varepsilon$ and $\GC \cdot z$ are invariant under $\varphi_t, t>0$, we see that the components $V_{\varepsilon,i}$ are invariant under forward flow, as well.

\begin{prop}\label{prop: V epsilon in GC for small epsilon} There exists $\varepsilon >0$ such that $\overline V_\varepsilon \subseteq \GC \cdot z$.  Moreover, $\omega (V_\varepsilon) = U\cdot z$ for small $\varepsilon >0$.\end{prop}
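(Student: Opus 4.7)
The first claim — that $\overline{V_\varepsilon}\subseteq\GC\cdot z$ for small $\varepsilon>0$ — is the heart of the proposition; the moreover statement will follow by tracking forward trajectories. The key input I will use is that $\nmus$ restricted to $\GC\cdot z$ attains its global minimum $M_\beta$ precisely on the compact set $U\cdot z$. This combines the already-cited Ness result (the first part of Theorem~\ref{complex gradient flow thm}), which identifies $\mathfrak C\cap\GC\cdot z = U\cdot z$, with the fact that $\nmus$ is nonincreasing along the negative gradient flow and that every point of $\GC\cdot z\subseteq S_\beta$ flows into $C_\beta$, where $\nmus\equiv M_\beta$.

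The plan is then as follows. Using local closedness of orbits of complex algebraic group actions together with compactness of $U\cdot z$, I would choose an open neighborhood $T\subseteq \GC\cdot z$ of $U\cdot z$ (in the subspace topology from $\cpvc$) whose $\cpvc$-closure $\overline T$ is compact and still contained in $\GC\cdot z$. Then $\partial T=\overline T\setminus T$ is compact and disjoint from $U\cdot z$, so continuity of $\nmus$ and the strict-minimum statement above yield some $c>0$ with $\nmus\geq M_\beta+c$ on $\partial T$. Fix any $\varepsilon\in(0,c)$. Each component $V_{\varepsilon,i}$ satisfies $\nmus<M_\beta+\varepsilon<M_\beta+c$ throughout and hence misses $\partial T$; being connected and meeting $U\cdot z\subseteq T$, it cannot exit $T$, so $V_{\varepsilon,i}\subseteq T$. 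Summing over $i$ gives $V_\varepsilon\subseteq T$, and therefore $\overline{V_\varepsilon}\subseteq \overline T\subseteq \GC\cdot z$.

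For the moreover statement I would note that $V_\varepsilon$ is forward-invariant under $\varphi_t$, since it is a union of connected components of $\mathcal O_\varepsilon\cap\GC\cdot z$ and both $\mathcal O_\varepsilon$ and $\GC\cdot z$ are $\varphi_t$-invariant for $t\geq 0$. Hence for $x\in V_\varepsilon$ the trajectory $\varphi_t(x)$ stays inside $V_\varepsilon$, so $\omega(x)\subseteq\overline{V_\varepsilon}\subseteq \GC\cdot z$. By the remark following Definition~\ref{def: omega limit set}, $\omega(x)$ consists of critical points of $\nmus$, so $\omega(x)\subseteq \GC\cdot z\cap\mathfrak C=U\cdot z$ by Ness. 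Conversely $U\cdot z\subseteq V_\varepsilon$ and each $y\in U\cdot z$ is a fixed point of $\varphi_t$ with $\omega(y)=\{y\}$, giving $U\cdot z\subseteq\omega(V_\varepsilon)$.

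The step I expect to require the most care is the no-escape argument in paragraph two: a priori, a component $V_{\varepsilon,i}$ could have a long tendril reaching far from $U\cdot z$ even while staying at small levels of $\nmus$. The uniform positive lower bound on $\nmus-M_\beta$ over the compact barrier $\partial T$ — a direct consequence of $U\cdot z$ being the entire zero set of $(\nmus-M_\beta)|_{\GC\cdot z}$ — is what rules this out, and is the reason Ness's identification of the critical set on the orbit is essential rather than cosmetic.
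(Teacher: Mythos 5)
Your argument is correct and follows essentially the same route as the paper: a relatively compact neighborhood of $U\cdot z$ inside the orbit serves as a barrier, Ness's identification $\mathfrak C\cap \GC\cdot z=U\cdot z$ (together with $\nmus\ge M_\beta$ on $S_\beta$) forces small-$\varepsilon$ components to stay inside it by connectedness, and the moreover statement follows from forward invariance of $V_\varepsilon$ plus the fact that $\omega$-limit points are critical. The only differences are cosmetic: the paper builds the neighborhood explicitly as $A=U\exp(B)\cdot z$ and gets the smallness of $\varepsilon$ by a limiting contradiction with points $p_\varepsilon\in V_\varepsilon\cap\partial A$ as $\varepsilon\to 0$, whereas you invoke local closedness of the orbit and extract a uniform gap $\nmus\ge M_\beta+c$ on the compact barrier $\partial T$.
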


\begin{proof}  Before proving this statement, we will show that there exists some open set $A$ containing $U\cdot z$ in $\GC \cdot z$ such that $\overline A$ is a compact subset of $\GC \cdot z$.  Then we will show that $V_\varepsilon \subseteq A$ for small $\varepsilon$.  This would then prove the first assertion of the proposition.

Recall that $G^\mathbb C = U \ exp(iLU)$.  If we let $B=$ the open unit ball in $iLU$ then $A=U\ exp(B)\cdot z$ has the said property, that is, $\overline A$ is a compact subset of $G^\mathbb C \cdot z$.

\begin{lemma} Either $V_\varepsilon \subseteq A$ or $V_\varepsilon \cap \partial A \neq \emptyset$.  For small $\varepsilon > 0$, $V_\varepsilon \subseteq A$. \end{lemma}

This will follow from
\begin{lemma} Either $V_{\varepsilon, i} \subseteq A$ or $V_{\varepsilon, i}\cap \partial A \neq \emptyset$. \end{lemma}

To prove this lemma, suppose $V_{\varepsilon,i} \not \subseteq A$ and $V_{\varepsilon, i} \cap \partial A = \emptyset$.  Since $V_{\varepsilon,i}\cap A$ intersects $U\cdot z$, we see that  $V_{\varepsilon,i} = (V_{\varepsilon,i} \cap A) \cup (V_{\varepsilon,i} \backslash \overline A)$; that is, $V_{\varepsilon,i}$ is separated by these disjoint open sets.  This contradicts the connectedness of $V_{\varepsilon,i}$ and the lemma is proven.

We continue with the proof of the first lemma.  Suppose $V_\varepsilon \not \subseteq A$ for every $\varepsilon >0$.  Then for each $\varepsilon$ there exists some point $p_\varepsilon \in V_\varepsilon \cap \partial A$.  By definition $||\mu^*||^2(p_\varepsilon) \leq M_\beta+\varepsilon$. Letting epsilon go to zero we can find a limit point $p_\infty \in \partial A$ as $\partial A$ is compact.  Hence, $p_\infty \in G^\mathbb C\cdot z -A \subseteq G^\mathbb C \cdot z - U\cdot z$.  Moreover, $||\mu^*||^2(p_\infty) = M_\beta$ and we have found a point in $\GC \cdot z$ which is not on $U\cdot z$ but minimizes $||\mu^*||$ on $G^\mathbb C\cdot z$.  This is a contradiction since $G^\mathbb C \cdot z \cap C_\beta = U\cdot z$ by \cite[Theorem 6.2]{Ness}.  Therefore, $V_\varepsilon \subseteq A$ for small $\varepsilon$.  This proves the first lemma and the first claim in the proposition.

To finish the proof of the proposition, we observe that $U\cdot z= \omega (U\cdot z)\subseteq \omega(V_\varepsilon)$ since $U\cdot z \subseteq C_\beta$ and $\varphi_t$ fixes the points of $C_\beta$ for all $t$.  Thus we just need to show containment in the other direction.  Since the set $V_\varepsilon$ is invariant under forward flow and $V_\varepsilon \subseteq G^\mathbb C \cdot z \subseteq S_\beta$, we see that $\omega (V_\varepsilon) \subseteq \overline{V_\varepsilon}\cap C_\beta \subseteq G^\mathbb C \cdot z\cap C_\beta = U\cdot z$.
\end{proof}

\begin{defin} Let $\mathcal O = \{ x\in G^\mathbb C\cdot z \ | \ \omega (x) \subseteq U\cdot z\}$. \end{defin}

\begin{lemma}Consider the set $\mathcal O$ defined above.  Then $\mathcal O = G^\mathbb C \cdot z$.
\end{lemma}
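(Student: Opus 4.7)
My plan is to establish $\mathcal O = \GC \cdot z$ by showing that $\mathcal O$ is open, closed, and meets every connected component of $\GC \cdot z$.

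For openness of $\mathcal O$: take $x \in \mathcal O$, so $\omega(x) \subseteq U \cdot z \subseteq V_\varepsilon$. Since the forward trajectory $\{\varphi_t(x)\}_{t \geq 0}$ accumulates on $\omega(x)$ and $V_\varepsilon$ is open in $\GC \cdot z$, there exists $T > 0$ with $\varphi_T(x) \in V_\varepsilon$. By continuity of $\varphi_T$ there is an open neighborhood $W$ of $x$ in $\GC \cdot z$ with $\varphi_T(W) \subseteq V_\varepsilon$, and Proposition \ref{prop: V epsilon in GC for small epsilon} then yields $\omega(y) = \omega(\varphi_T(y)) \subseteq U \cdot z$ for every $y \in W$; hence $W \subseteq \mathcal O$.

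For closedness of $\mathcal O$ in $\GC \cdot z$: suppose $x_n \to x$ with $x_n \in \mathcal O$ and $x \in \GC \cdot z$, and pick $T_n \geq 0$ with $\varphi_{T_n}(x_n) \in V_\varepsilon$. If the sequence $\{T_n\}$ is bounded, pass to a subsequence with $T_n \to T_\ast$; continuity of the flow gives $\varphi_{T_\ast}(x) = \lim_n \varphi_{T_n}(x_n) \in \overline{V_\varepsilon} \subseteq \GC \cdot z$. Combined with the forward-invariance of $\overline{V_\varepsilon}$ and the observation $\omega(\overline{V_\varepsilon}) \subseteq \overline{V_\varepsilon} \cap C_\beta \subseteq \GC \cdot z \cap C_\beta = U \cdot z$, this forces $\omega(x) \subseteq U \cdot z$, i.e., $x \in \mathcal O$. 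The main obstacle is to rule out $T_n \to \infty$: I plan to invoke the minimally degenerate Morse structure of $\nmus$ near $C_\beta$ to obtain a uniform entry-time bound into $V_\varepsilon$ for trajectories in $\mathcal O_\varepsilon \cap \GC \cdot z$ that stay close to $\overline{V_\varepsilon}$. Specifically, a neighborhood of $\overline{V_\varepsilon}$ in which $|\text{grad}\,\nmus|$ is bounded below away from $C_\beta$ forces any trajectory inside $\{\nmus < M_\beta + \varepsilon\} \cap \GC \cdot z$ to reach $V_\varepsilon$ within a uniform time, preventing $T_n \to \infty$.

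To finish, I use the Cartan decomposition $\GC = UQ$ with $Q = \exp(\q)$ connected (hence $Q \subseteq \overline H_0$) to show that every connected component of $\GC$ is of the form $u \cdot \overline H_0$ for some $u \in U$. Consequently every connected component of $\GC \cdot z$ contains the point $u \cdot z \in U \cdot z \subseteq \mathcal O$. Since $\mathcal O$ is open, closed, and meets every component of $\GC \cdot z$, it must equal the entire orbit, completing the proof.
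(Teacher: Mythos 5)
Your openness argument and your component-counting step are fine and essentially coincide with the paper's (the paper pulls back $V_\varepsilon$ by $\varphi_{-t_*}$ instead of taking $W=(\varphi_T|_{\GC\cdot z})^{-1}(V_\varepsilon)$, and it likewise uses $\GC=UQ$ with $Q=\exp(\q)$ contractible to see that $U\cdot z$ meets every component of $\GC\cdot z$). The genuine gap is in your closedness argument, precisely in the case $T_n\to\infty$ that you flag as ``the main obstacle.'' The proposed fix --- a uniform entry-time bound into $V_\varepsilon$ for trajectories in $\{\nmus<M_\beta+\varepsilon\}\cap\GC\cdot z$, extracted from a lower bound on $|grad\ \nmus|$ away from $C_\beta$ --- does not work and is essentially circular. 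A gradient lower bound off a neighborhood of $C_\beta$ only bounds the total time a trajectory spends away from the critical set; it does not force the trajectory to enter $V_\varepsilon$ at all, let alone in bounded time. A trajectory in $\mathcal O_\varepsilon\cap\GC\cdot z$ could a priori limit onto points of $C_\beta$ lying in $\overline{\GC\cdot z}\setminus\GC\cdot z$ (hence outside $U\cdot z$ and outside $V_\varepsilon$); ruling this out is exactly the content of the theorem being proved, as the paper emphasizes when it warns that ``there is no reason to expect that $\omega(gx)$ lies entirely in the orbit.'' Moreover, near critical points the flow slows arbitrarily, so uniform time bounds of the kind you want are not available even when entry does occur.

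The missing idea, which is how the paper closes this hole without any time comparison at all, is to exploit that $V_\varepsilon$ is by definition a \emph{union of connected components} of $\mathcal O_\varepsilon\cap\GC\cdot z$, each forward-invariant. Given $y_n\in\mathcal O$ with $y_n\to y\in\GC\cdot z$, one uses $\GC$-invariance of $S_\beta$ to find a finite $M$ with $\varphi_M(y)\in\mathcal O_\varepsilon$, lets $\mathcal O_\varepsilon^y$ be the component of $\mathcal O_\varepsilon\cap\GC\cdot z$ containing $\varphi_M(y)$, and picks $N$ with $\varphi_M(y_N)\in\mathcal O_\varepsilon^y$ while also $\varphi_{T_N}(y_N)\in V_\varepsilon$. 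Whichever of $M$, $T_N$ is larger, forward invariance of $\mathcal O_\varepsilon^y$ and of $V_\varepsilon$ produces a single point in $\mathcal O_\varepsilon^y\cap V_\varepsilon$; since $V_\varepsilon$ is a union of components of the same set, this forces $\mathcal O_\varepsilon^y\subseteq V_\varepsilon$, hence $\varphi_t(y)\in V_\varepsilon$ for $t\ge M$ and $\omega(y)\subseteq\omega(V_\varepsilon)=U\cdot z$ by Proposition \ref{prop: V epsilon in GC for small epsilon}. This connectedness argument makes no distinction between bounded and unbounded $T_n$, so your dichotomy (and the unproved uniform bound) can be discarded; without it, your proof of closedness, and hence of the lemma, is incomplete.
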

To prove the lemma it suffices to show that $\mathcal O$ is open and closed in $G^\mathbb C\cdot z$ and intersects each component of $G^\mathbb C \cdot z$.  To see that $\mathcal O$ intersects each component of $G^\mathbb C \cdot z$, we observe that $\mathcal O$ contains $U\cdot z$ and that each component of $G^\mathbb C$ intersects $U$ since $G^\mathbb C = UQ$ and $Q=exp(\q)$ is contractible, see the remarks before Proposition \ref{prop: extend inner product on V to VC}.  Choose $\varepsilon >0$ as in Proposition \ref{prop: V epsilon in GC for small epsilon}.\\

\noindent \textbf {$\mathcal O$ is open}:

We know for small $\varepsilon>0$, $V_\varepsilon$  is open in $\GC \cdot z$, contains $U\cdot z$, and $V_\varepsilon$ is contained in $\mathcal O$ by Proposition \ref{prop: V epsilon in GC for small epsilon}.  It suffices to consider $x\in
\mathcal O \backslash U\cdot z$.  Then there exists $t_*>0$ such that
$\varphi_{t_*}(x)$ intersects $V_\varepsilon$, from the definition of $\mathcal O$.
But $\varphi_{-t_*}:V_\varepsilon \to \varphi_{-t_*}(V_\varepsilon)$ is a diffeomorphism of $\GC \cdot z$ (and also of $S_\beta$). 
  Thus, $\varphi_{-t_*} (V_\varepsilon)$ is
an open set in $\GC\cdot z$ containing $x$, which is contained in
$\mathcal O$.  Therefore $\mathcal O$ is open.\\

\noindent \textbf{$\mathcal O$ is closed}:

We will show $\partial \mathcal O = \emptyset$; here we mean the boundary of $\mathcal O$ in the topological space $G^\mathbb C \cdot z$.  Take $y_n \in \mathcal O$ such that $y_n \to y \in G^\mathbb C\cdot z$.  Since $z\in C_\beta\subseteq S_\beta$ and $S_\beta$ is $G^\mathbb C$-invariant, it follows that $y\in G^\mathbb C\cdot z \subseteq S_\beta$ and hence $\omega (y) \subseteq C_\beta$.  Thus, there exists $M>0$ such that $\varphi _M(y) \in \mathcal O _\varepsilon$.  We will denote the component of $\mathcal O_\varepsilon \cap \GC \cdot z$ containing $\varphi_M(y) $ by $\mathcal O_\varepsilon ^y$; again, this component is open in $\GC\cdot z$ as $\GC \cdot z$ is locally connected.  Observe that for $t\geq M$, $\varphi_t(y)\in \mathcal O_\varepsilon ^y$ and $\varphi_s (\mathcal O_\varepsilon ^y ) \subseteq \mathcal O_\varepsilon ^y$ for $s\geq 0$ as $\varphi_s$ leaves $\mathcal O_\varepsilon \cap G^\mathbb C \cdot z$ invariant for $s\geq 0$. Since $\varphi_t$ is a diffeomorphism on $S_\beta$ which preserves $\GC \cdot z$, $\varphi_M^{-1} (\mathcal O_\varepsilon ^y)$ is an open set of $\GC \cdot z$ containing y.

We assert that $\mathcal O_\varepsilon ^y \cap V_\varepsilon \neq \emptyset$.  Since $y_n\in \mathcal O$, we know there exists $T_n >0$ such that $\varphi_{T_n} (y_n) \in V_\varepsilon$, by definition of $\mathcal O$.  Additionally, for $t\geq T_n$, $\varphi_t(y_n)\in V_\varepsilon$ by the flow invariance of $V_\varepsilon$.

Pick $N$ such that $y_N \in \varphi_M^{-1}(\mathcal O_\varepsilon ^y)$, which we can do as $\varphi_M^{-1}(\mathcal O_\varepsilon ^y)$ is open and $y_n \to y$.  Then we have $\varphi_M(y_N) \in \mathcal O _\varepsilon ^y$, a single component of $\mathcal O_\varepsilon \cap \GC \cdot z$, and $\varphi_{T_N}(y_N)\in V_\varepsilon$.

\begin{quote}
    If $M\geq T_N$, then $\varphi_M(y_N) = \varphi_{M-T_N}( \varphi_{T_N}(y_N)) \in \varphi_{M-T_N} (V_\varepsilon) \subseteq V_\varepsilon$.

    That is, $\varphi_M(y_N) \in \mathcal O_\varepsilon ^y \cap V_\varepsilon \neq \emptyset$.\\

    If $T_N\geq M$, then $\varphi_{T_N}(y_N) = \varphi_{T_N-M}( \varphi_M(y_N)) \in \varphi_{T_N-M} (\mathcal O_\varepsilon ^y) \subseteq \mathcal O_\varepsilon ^y$.

    That is, $\varphi_{T_N}(y_N)\in \mathcal O_\varepsilon ^y \cap V_\varepsilon \neq \emptyset$.
\end{quote}

Thus, $\mathcal O_\varepsilon ^y$ being a connected component of $\mathcal O_\varepsilon \cap \GC \cdot z$ which intersects $V_\varepsilon$, a union of connected components of $\mathcal O_\varepsilon \cap G^\mathbb C\cdot z $, we have $\mathcal O_\varepsilon^y \subseteq V_\varepsilon$.  That is, $y\in \mathcal O$  since $\varphi_t (y) \in V_\varepsilon$ for $t\geq M$ and $\omega (V_\varepsilon) \subseteq U\cdot z$ by Proposition \ref{prop: V epsilon in GC for small epsilon}.  This proves the lemma. \\

This completes the proof of theorem \ref{complex gradient flow thm}.

\section{Applications to the left-invariant geometry of Lie groups}
We apply the previous results to the study of left-invariant metrics on nilpotent and solvable Lie groups.  The relationship between left-invariant Einstein metrics on solvable Lie groups, left-invariant Ricci soliton metrics on nilpotent Lie groups, and Geometric Invariant Theory was established and explored by J. Heber \cite{Heber} and J. Lauret \cite{LauretNilsoliton}.  We present a sketch of this relationship below and refer the reader to \cite{Lauret:EinsteinSolvandNilsolitonsCordobaConf2007} for more details.

In the search for left-invariant Einstein metrics on solvable Lie groups, it is sufficient to consider the nilradical of the given solvable Lie group.

\begin{thm}[Lauret \cite{LauretStandard}] Let $S$ be a solvable Lie group with nilradical $N$.  Then $S$ admits a left-invariant Einstein metric if and only if $N$ admits a left-invariant Ricci soliton metric.\end{thm}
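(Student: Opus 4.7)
The plan is to exploit the variety-of-brackets viewpoint of Lauret, which identifies left-invariant Ricci solitons on nilpotent Lie groups with distinguished vectors in the sense of this paper, and then to translate between nilpotent brackets and solvable extensions via the standardness theorem.

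\emph{Setup.} Fix $\mathfrak{n}=\mathbb{R}^n$ with a chosen inner product and let $V=\Lambda^2\mathfrak{n}^*\otimes\mathfrak{n}$ be the space of skew-symmetric brackets. The group $G=GL(\mathfrak{n})$ acts on $V$ by change of basis, and the closed $G$-invariant subvariety $\mathcal{N}\subseteq V$ of nilpotent Lie brackets carries, for each $\mu\in\mathcal{N}$, the simply-connected nilpotent Lie group $N_\mu$. A direct curvature computation, due to Lauret, shows that the Ricci operator of the chosen inner product, viewed as a left-invariant metric on $N_\mu$, equals $-\tfrac14\tilde m(\mu)$ up to convention, where $\tilde m\colon V\to\mathfrak{p}$ is the moment map of Section 2 with this choice of $G$. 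Consequently, $\mu$ defines a nilsoliton metric if and only if $[\mu]\in\rpv$ is a critical point of $\nms$, i.e., $\mu$ is distinguished.

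\emph{Soliton implies Einstein.} Given a nilsoliton, the soliton equation reads $\mathrm{Ric}_\mu=cI+D$ for some $c<0$ and some symmetric derivation $D$ of $\mu$. I would form the metric solvable extension $\mathfrak{s}=\mathfrak{a}\oplus\mathfrak{n}$ where $\mathfrak{a}$ is a maximal abelian subalgebra of the symmetric derivations of $\mu$ containing $D$, acting on $\mathfrak{n}$ by its natural inclusion and carrying an inner product orthogonal to $\mathfrak{n}$ and scaled appropriately. Inserting this structure into the standard formula for the Ricci curvature of a left-invariant metric on a solvable Lie algebra reduces the Einstein condition on $\mathfrak{s}$ precisely to the soliton equation already satisfied by $\mu$, giving a left-invariant Einstein metric on $S$.

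\emph{Einstein implies soliton.} This is the harder direction. The key input is the standardness theorem: if $\mathfrak{s}$ is Einstein with nilradical $\mathfrak{n}$ and $\mathfrak{a}$ is the orthogonal complement of $\mathfrak{n}$ in $\mathfrak{s}$, then $\mathfrak{a}$ is abelian and acts on $\mathfrak{n}$ by symmetric derivations. Granting standardness, the Ricci formula on $\mathfrak{s}$ decomposes into pieces along $\mathfrak{a}$ and along $\mathfrak{n}$; the restriction to $\mathfrak{n}$ has the shape $\mathrm{Ric}|_{\mathfrak{n}}=cI+D$, where $D$ records the symmetric action on $\mathfrak{n}$ of a distinguished element of $\mathfrak{a}$ (essentially the mean curvature vector). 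This is exactly the nilsoliton equation, so $(N,g|_N)$ is a nilsoliton.

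\emph{Main obstacle.} The serious work is the standardness theorem itself, which is Lauret's result in \cite{LauretStandard}. Its proof uses precisely the variational machinery developed in the present paper: a hypothetical non-standard Einstein solvmanifold produces, on the projective variety of brackets, a configuration that contradicts the structure of the critical set of $\nms$, and one controls the $G$-orbits and their closures by invoking the real/complex comparison of Theorem \ref{distinguished orbits thm} together with the collapse of a distinguished orbit to a single $K$-orbit under the negative gradient flow from Theorem \ref{thm: real gradient flow thm}. Once standardness is in hand, both implications of the theorem reduce to explicit computations with the Besse formula for Ricci curvature on solvable Lie algebras.
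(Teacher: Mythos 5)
The statement you were asked about is not proved in the paper at all: it is quoted as background for Section 6 and attributed to Lauret \cite{LauretStandard} (building on Heber \cite{Heber}), so there is no in-paper argument to compare with. Judged on its own terms, your proposal has a genuine gap: essentially all of its mathematical content is deferred to the very reference the theorem is cited from. The ``Einstein implies soliton'' direction rests entirely on the standardness theorem, which you yourself identify as ``Lauret's result in \cite{LauretStandard}''; invoking it amounts to citing the theorem you are supposed to prove. The remaining ingredients --- Heber's structure theory showing that on a standard Einstein solvmanifold the mean curvature vector acts by a symmetric derivation so that $\mathrm{Ric}|_{\mathfrak n}=cI+D$, and the converse computation that the extension of a nilsoliton by a suitable abelian algebra of symmetric derivations is Einstein --- are asserted rather than carried out, so what remains is a roadmap of the Heber--Lauret theory rather than a proof. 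Your description of how standardness is established is also inaccurate: Lauret's proof uses a stratification/GIT analysis of the variety of brackets, but it does not (and logically cannot) rely on Theorem \ref{distinguished orbits thm} or Theorem \ref{thm: real gradient flow thm} of the present paper; here the dependence runs the other way, with the quoted theorem serving as input for Section 6.

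There is also a mismatch with the statement as written in the ``soliton implies Einstein'' direction. The theorem fixes a solvable group $S$ with nilradical $N$, but your construction produces a metric solvable extension $\mathfrak a\oplus\mathfrak n$ that in general is not isomorphic to the given $S$; for an arbitrary fixed $S$ (for instance one whose abelian part acts on $\mathfrak n$ by non-normal or non-semisimple operators) no left-invariant Einstein metric exists even when $N$ is a nilsoliton, so the implication cannot be obtained for $S$ itself by this route. What your construction actually shows is that $N$ is an Einstein nilradical, i.e.\ that \emph{some} solvable extension of $N$ carries an Einstein metric --- which is the sense in which the result is used later in the paper --- but then your write-up should say explicitly that the statement is to be read in that existence sense rather than for the given $S$. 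Finally, the identification of nilsolitons with critical points of $\nms$ on $\rpv$ (your ``direct curvature computation'') is exactly the subsequent theorem of Lauret quoted in Section 6, so it too should be cited or proved, not folded silently into the setup.
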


Left-invariant Ricci soliton metrics on nilpotent Lie groups are often referred to as \textit{nilsoliton} metrics.  The search for, and classification of, nilsoliton metrics aids in the search for, and classification of, left-invariant Einstein metrics on solvmanifolds; however, these metrics are very interesting in their own right.\\

The relationship between left-invariant metrics on nilpotent Lie groups and Geometric Invariant Theory is as follows.  Consider a nilpotent Lie group $N$ with Lie algebra $\N$.  A left-invariant metric on $N$ corresponds to a choice of inner product on $\N$.  Thus, the space of left-invariant metrics on $N$ is the space of inner products on $\N$.  We can vary the inner products on $\N$ to search for nilsolitons, or we can fix our choice of inner product on $\N$ and instead vary the Lie algebra structure on $\N$.  This is the perspective taken by Lauret.

Consider the vector space $\mathbb R^n$ with the usual inner product; that is, so that standard basis is orthonormal.  We consider the space of skew-symmetric, bilinear forms on $\mathbb R^n$
    $$ V= \wedge ^2(\mathbb R^n)^* \otimes \mathbb R^n = \{ \mu : \mathbb R^n \times \mathbb R^n \to \mathbb R^n \ | \ \mu \mbox{ is bilinear and skew-symmetric } \}$$
The set of Lie algebra brackets is clearly a subset of the vector space above; in fact, the set of Lie algebra structures is a variety.  Moreover, the set of nilpotent Lie algebra brackets is also a  variety.  It is  described by the polynomials describing the Jacobi identity and nilpotency (via Cartan's criterion for nilpotentcy).

There is a natural $GL_n\mathbb R$ action on $V$ which preserves the varieties of Lie algebra structures.  For $\mu \in V$, $g\in GL_n\mathbb R$, and $X,Y\in \mathbb R^n$ we have
    $$g\cdot \mu (X,Y) = g\mu(g^{-1}X,g^{-1}Y)$$
In this setting, the $GL_n\mathbb R$-orbits are precisely the isomorphism classes of Lie algebra structures on $\mathbb R^n$.

The inner product on $\mathbb R^n$ extends naturally to an inner product on $V= \wedge ^2(\mathbb R^n)^* \otimes \mathbb R^n$ as follows.  Denote the inner product on $\mathbb R^n$ by $\langle \cdot , \cdot \rangle$ and denote its extension to $V$ by the same notation.  Then for $\mu , \lambda \in V$ we define $\langle \mu , \lambda \rangle = \sum_{ij} \langle \mu(e_i,e_j),\lambda(e_i,e_j)\rangle $ where $\{e_i\}$ is an orthonormal basis of $\mathbb R^n$.  On the Lie algebra $\mathfrak{gl}_n\mathbb R$ we use the usual inner product from the trace form, that is,  $\langle \langle X , Y \rangle \rangle = tr(X Y^t)$ for $X,Y \in \mathfrak{gl}_n\mathbb R$.

Using these inner products, we can construct the moment map $\tilde m$ for the action of $GL_n\mathbb R$ on $V$ and similarly $m$ for the action of $GL_n\mathbb R$ on $\mathbb PV$ (see section 2).

\begin{thm}[Lauret] Let $N_\mu$ denote the simply connected nilpotent Lie group with left-invariant metric whose Lie algebra $\N_\mu$ (with inner product) corresponds to the point $\mu \in V$.  Then $N_\mu$ is a nilsoliton if and only if $\mu$ is a critical point of $F(v)=||m\circ \pi||^2(v)$.  Equivalently, $N_\mu$ is an Einstein nilradical if and only if the orbit $GL_n\mathbb R \cdot \mu$ is distinguished.
\end{thm}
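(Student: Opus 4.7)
The plan is to identify the moment map $\tilde m(\mu)$ with a positive constant multiple of the Ricci endomorphism $\mathrm{Ric}_\mu$ of the metric nilpotent Lie algebra $(\N_\mu, \langle \cdot, \cdot \rangle)$, and then translate the projective critical-point condition into the nilsoliton equation.

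First I would compute $\tilde m : V \to \p = \mathrm{Sym}(n,\mathbb{R})$ explicitly. Here $\p$ is the symmetric complement in $\mathfrak{gl}_n\mathbb{R} = \mathfrak{so}(n) \oplus \mathrm{Sym}(n)$, and the defining relation $\ll \tilde m(\mu), X \gg = \langle X \cdot \mu, \mu \rangle$ for $X \in \mathrm{Sym}(n)$ can be unwound using the $GL_n\mathbb{R}$-action formula $(X\cdot \mu)(Y,Z) = X\mu(Y,Z) - \mu(XY,Z) - \mu(Y,XZ)$. The resulting expression matches, up to a fixed positive universal constant, the standard orthonormal-basis formula for the Ricci endomorphism of a nilpotent metric Lie algebra. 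This is precisely Lauret's computation and yields $\tilde m(\mu) = c_0\,\mathrm{Ric}_\mu$ for some $c_0 > 0$ independent of $\mu$.

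Next I would invoke Lemma \ref{lemma: grad m2= 4X m(x)}, which gives $\mathrm{grad}\,\nms[\mu] = 4 X_{m[\mu]}[\mu]$. Hence $[\mu]$ is critical for $F = \nms$ iff the vector field generated by $\tilde m(\mu) \in \p$ vanishes at $[\mu]$, i.e. iff $\tilde m(\mu) \cdot \mu = c\,\mu$ for some $c \in \mathbb{R}$. Expanding the action and substituting $\tilde m(\mu) = c_0\,\mathrm{Ric}_\mu$ rearranges to
\[
\mathrm{Ric}_\mu\,\mu(Y,Z) = \mu(\mathrm{Ric}_\mu Y, Z) + \mu(Y, \mathrm{Ric}_\mu Z) + \lambda\,\mu(Y,Z) \qquad \forall\, Y,Z,
\]
which is exactly the statement that $\mathrm{Ric}_\mu + \lambda I$ is a derivation of $\mu$; equivalently $\mathrm{Ric}_\mu = -\lambda I + D$ with $D \in \mathrm{Der}(\N_\mu)$. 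This is the defining equation of a nilsoliton metric on $N_\mu$, proving the first equivalence.

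For the second equivalence, both ``$N_\mu$ is an Einstein nilradical'' and ``$GL_n\mathbb{R}\cdot\mu$ is distinguished'' depend only on the orbit $GL_n\mathbb{R}\cdot\mu$: the former because admitting a nilsoliton is an isomorphism invariant of $\N_\mu$, and the latter by definition of distinguished. Combined with the first equivalence applied at any representative of the orbit, the two properties coincide. The chief obstacle is the explicit identification $\tilde m(\mu) = c_0\,\mathrm{Ric}_\mu$, a direct but bookkeeping-heavy calculation matching the infinitesimal action of a symmetric endomorphism on brackets with the standard orthonormal-basis formula for $\mathrm{Ric}_\mu$; beyond careful tracking of signs and normalizations, no conceptual difficulty remains.
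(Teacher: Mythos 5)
Your outline is the standard argument of Lauret, and in fact the paper offers no proof of this statement at all: it is quoted and attributed to the cited survey. So the relevant comparison is with the cited source, and your sketch reproduces it correctly in outline: for a \emph{nilpotent} bracket $\mu$ the moment map is a positive multiple of the Ricci endomorphism (here nilpotency matters: the Killing-form and mean-curvature terms in the general Ricci formula for a metric Lie algebra vanish, which you implicitly use and should say), and Lemma \ref{lemma: grad m2= 4X m(x)} turns criticality of $\nms$ at $[\mu]$ into $\tilde m(\mu)\cdot\mu\in\mathbb R\mu$, i.e.\ $\mathrm{Ric}_\mu = cI + D$ with $D\in\mathrm{Der}(\N_\mu)$.

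Two steps need to be made explicit before the argument is complete. First, you call $\mathrm{Ric}_\mu = cI + D$ ``the defining equation of a nilsoliton metric,'' but the paper defines a nilsoliton as a left-invariant Ricci soliton metric. The implication from the algebraic equation to the Ricci soliton is elementary (integrate $D$ to a one-parameter group of automorphisms and feed the resulting diffeomorphisms into the soliton equation), but the converse --- which is exactly what your ``nilsoliton $\Rightarrow$ critical point'' direction uses --- is a nontrivial theorem of Lauret and must be invoked, not absorbed into the definition. Second, in the last equivalence the direction ``Einstein nilradical $\Rightarrow$ distinguished'' is not yet closed by the remark that both properties are orbit invariants: knowing that \emph{some} inner product $q$ on $\N_\mu$ is a nilsoliton, you must produce an orbit representative that is critical \emph{for the fixed background inner product}. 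This requires the change-of-basis dictionary: choosing $g\in GL_n\mathbb R$ carrying a $\langle\cdot,\cdot\rangle$-orthonormal basis to a $q$-orthonormal one gives an isometric isomorphism between $(\N_\mu,q)$ and $(\N_{g\cdot\mu},\langle\cdot,\cdot\rangle)$, after which the first equivalence applies at $g\cdot\mu$. Finally, since $m\circ\pi$ is scale invariant, criticality of $F$ on $V\setminus\{0\}$ is the same as criticality of $\nms$ on $\rpv$, which is what ``distinguished'' refers to; this one-line remark should also appear. With these additions your proposal is correct and coincides with the proof in the cited reference, together with the Einstein/nilsoliton correspondence already stated earlier in this section.
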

This theorem can be found in \cite{Lauret:EinsteinSolvandNilsolitonsCordobaConf2007}.  The last equivalence is not stated using the label of distinguished orbit but is stated using the idea.  In section 10 of \cite{Lauret:EinsteinSolvandNilsolitonsCordobaConf2007} there are several open questions of interest which are presented.  We state Question \# 5 from this list.

\begin{question}  Consider the function $F: V \to \mathbb R$ defined by $F(v) = ||m \circ \pi(v)||^2$ where $\pi : V\to \mathbb PV$ is the usual projection and $m$ is the moment map on real projective space.  Define $\mu_t$ to be the integral curve of $-grad\ F$ starting at $\mu_0$ on the sphere of radius 2.  Is $\mu_\infty$ (the limit point along the integral curve) contained in the orbit $GL_n\mathbb R \cdot \mu_0$ if $N_{\mu_0}$ is an Einstein nilradical?
\end{question}

This is clearly a special case of our work (Theorem \ref{thm: real gradient flow thm}).  The flow restricted to the sphere of radius 2 in $V$ projects onto the flow in projective space; recall that the sphere is a 2-to-1 cover of $\mathbb PV$.  Thus, convergence within the group orbit in the sphere is equivalent to convergence within the group orbit in projective space.  Finally, as $N_{\mu_0}$ being an Einstein nilradical is equivalent to the orbit $GL_n\mathbb R \cdot \mu_0$ begin distinguished, we have the following.

\begin{thm} Let $N_{\mu_0}$ be an Einstein nilradical.  Let $\mu_\infty$ denote the limit point of the negative gradient flow of the function $F$ starting at $\mu_0$.  Then $\mu_\infty$ is contained in the orbit $GL_n\mathbb R \cdot \mu_0$; that is, $N_{\mu_0}$ and $N_{\mu_\infty}$ are isomorphic Lie groups.
\end{thm}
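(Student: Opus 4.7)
The plan is to reduce the theorem to a direct application of Theorem~\ref{thm: real gradient flow thm} for $G = GL_n\mathbb{R}$ acting on $V = \wedge^2 (\mathbb{R}^n)^* \otimes \mathbb{R}^n$, once the flow on the sphere of radius $2$ has been identified with the negative gradient flow of $\|m\|^2$ on $\mathbb{P}V$.

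First I would invoke Lauret's theorem above: since $N_{\mu_0}$ is an Einstein nilradical, the orbit $GL_n\mathbb{R}\cdot\mu_0$ is distinguished, meaning that $GL_n\mathbb{R}\cdot[\mu_0] \subseteq \mathbb{P}V$ contains a critical point $[z]$ of $\|m\|^2$. Theorem~\ref{thm: real gradient flow thm}, applied with $G = GL_n\mathbb{R}$ and maximal compact $K = O(n)$, then gives
\[
\omega([\mu_0]) \;\subseteq\; K \cdot [z] \;\subseteq\; GL_n\mathbb{R} \cdot [\mu_0],
\]
where $\omega([\mu_0])$ is the $\omega$-limit set of the negative gradient flow of $\|m\|^2$ on $\mathbb{P}V$ starting at $[\mu_0]$.

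Next I would verify that the curve $\mu_t$ does project onto this projective flow. Since $F = \|m\circ\pi\|^2$ is invariant under nonzero scalar multiplication on $V$, its Euclidean gradient $\mathrm{grad}\,F(v)$ is orthogonal to $v$ at every point; consequently $\mu_t$ starting on the sphere $S_2$ of radius $2$ stays on $S_2$ and there coincides with the intrinsic negative gradient flow of $F|_{S_2}$. With the Fubini--Study metric of Section~4, the restriction $\pi\colon S_2 \to \mathbb{P}V$ is a two-fold Riemannian covering, hence a local isometry; therefore $\pi(\mu_t)$ is the negative gradient flow of $\|m\|^2$ on $\mathbb{P}V$ starting at $[\mu_0]$. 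Passing to the limit, $[\mu_\infty] \in \omega([\mu_0]) \subseteq GL_n\mathbb{R}\cdot[\mu_0]$.

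Finally, since $GL_n\mathbb{R}$ contains all nonzero scalar matrices, the orbit $GL_n\mathbb{R}\cdot\mu_0 \subseteq V\setminus\{0\}$ is closed under nonzero scalar multiplication, so $\pi^{-1}\bigl(GL_n\mathbb{R}\cdot[\mu_0]\bigr) = GL_n\mathbb{R}\cdot\mu_0$. Combined with the previous step and the fact that $\mu_\infty\in S_2$ is nonzero, this yields $\mu_\infty \in GL_n\mathbb{R}\cdot\mu_0$, whence $N_{\mu_0}$ and $N_{\mu_\infty}$ are isomorphic Lie groups. The substantive input is Theorem~\ref{thm: real gradient flow thm}; the remaining steps are essentially bookkeeping, and I expect the only minor hurdle to be carefully checking that the ambient gradient flow on $V$ restricted to $S_2$ really does push forward to the Fubini--Study gradient flow on $\mathbb{P}V$ under $\pi$.
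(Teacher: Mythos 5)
Your argument is correct and takes essentially the same route as the paper: reduce to Theorem \ref{thm: real gradient flow thm} via Lauret's equivalence between Einstein nilradicals and distinguished orbits, and identify the flow on the sphere of radius $2$ with the negative gradient flow of $\nms$ on $\rpv$ through the $2$-to-$1$ covering (your extra checks that $\mathrm{grad}\,F$ is tangent to the sphere and that scalar matrices in $GL_n\mathbb R$ let you lift from $\rpv$ back to the orbit in $V$ are exactly the details the paper leaves implicit). One minor quibble: with the Fubini--Study normalization the radius-$2$ sphere projects by a homothety rather than an isometry, so $\pi(\mu_t)$ is only a constant time-reparametrization of the gradient flow of $\nms$ --- which changes nothing about the $\omega$-limit set or your conclusion.
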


Lastly, we apply Corollary \ref{cor: real orbits which are complex equiv} to the setting of real forms of complex Lie algebras to obtain another interesting geometric consequence of our work.  Let $N$ be a simply connected real nilpotent Lie group with Lie algebra $\N$. Let $N^\mathbb C$ denote the simply connected complex Lie group with Lie algebra $\N^\mathbb C = \N \otimes \mathbb C$.  We call $N^\mathbb C$ the complexification of $N$.

\begin{thm}\label{thm: real forms of same complex group}   Let $N_1$ and $N_2$ be two real simply connected nilpotent Lie groups whose complexifications $N_1^\mathbb C$, $N_2^\mathbb C$ are isomorphic.  Then $N_1$ is an Einstein nilradical if and only if $N_2$ is an Einstein nilradical.
\end{thm}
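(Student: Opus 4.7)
The plan is to convert the hypothesis and the conclusion into Lauret's framework and then appeal to Corollary \ref{cor: real orbits which are complex equiv}. Let $n=\dim_\mathbb{R}\N_1=\dim_\mathbb{R}\N_2$. After fixing linear identifications $\N_i\cong\mathbb R^n$ (with $\mathbb R^n$ carrying the standard inner product), encode the Lie brackets as points $\mu_i\in V=\wedge^2(\mathbb R^n)^*\otimes\mathbb R^n$. Viewed inside $V^\mathbb{C}=\wedge^2(\mathbb C^n)^*\otimes\mathbb C^n$, the point $\mu_i$ is the bracket of $\N_i^\mathbb{C}$. Since simply connected nilpotent Lie groups are determined up to isomorphism by their Lie algebras, $N_1^\mathbb{C}\cong N_2^\mathbb{C}$ is equivalent to the existence of some $g\in GL_n\mathbb{C}$ with $g\cdot\mu_1=\mu_2$; hence the hypothesis says exactly that $\mu_1,\mu_2\in V$ lie in a common $GL_n\mathbb{C}$-orbit inside $V^\mathbb{C}$.

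Next I would invoke Lauret's characterization: $N_{\mu_i}$ is an Einstein nilradical if and only if the orbit $GL_n\mathbb R\cdot\mu_i$ in $\mathbb{P}V$ is distinguished. This reduces the theorem to the following statement: whenever two points of $V$ share a $GL_n\mathbb{C}$-orbit, the associated $GL_n\mathbb R$-orbits in $\mathbb{P}V$ are simultaneously distinguished or not. That, however, is precisely the content of Corollary \ref{cor: real orbits which are complex equiv} applied with $G=GL_n\mathbb R$ and $G^\mathbb{C}=GL_n\mathbb{C}$. (If $GL_n\mathbb R\cdot\mu_1=GL_n\mathbb R\cdot\mu_2$ there is nothing to prove; if they are distinct, the corollary supplies the equivalence directly.)

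There is essentially no real obstacle once the dictionary between isomorphism of complex nilpotent Lie algebras and equality of $GL_n\mathbb C$-orbits on $V^\mathbb C$ is in place. One needs only to check that the setup of Sections 2--4 applies: $GL_n\mathbb R$ is a real linear reductive group whose complexification (in the sense of Section 2) is $GL_n\mathbb C$, and the action on $V$ given by $g\cdot\mu(X,Y)=g\mu(g^{-1}X,g^{-1}Y)$ is a rational representation, both of which are standard. Chaining these observations with Lauret's theorem and Corollary \ref{cor: real orbits which are complex equiv} yields the conclusion: $N_1$ is an Einstein nilradical if and only if $N_2$ is.
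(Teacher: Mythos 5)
Your argument is correct and is essentially the paper's own: the paper obtains this theorem precisely by encoding the brackets as points of $V=\wedge^2(\mathbb R^n)^*\otimes\mathbb R^n$, translating isomorphism of the complexifications into equality of $GL_n\mathbb C$-orbits in $V^\mathbb C$, invoking Lauret's characterization of Einstein nilradicals via distinguished $GL_n\mathbb R$-orbits, and applying Corollary \ref{cor: real orbits which are complex equiv}. Your write-up just makes these reductions (including the trivial case of equal real orbits) more explicit than the paper does.
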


\textit{Remark}.  This theorem has also been obtained by Nikolayevsky in \cite[Theorem 6]{Nikolayevsky:EinsteinSolvmanifoldsandPreEinsteinDerivation} where he studies closed orbits of a particular reductive group associated to each nilmanifold.  There the philosophy of comparing real and complex group orbits is also employed.

\providecommand{\bysame}{\leavevmode\hbox to3em{\hrulefill}\thinspace}
\providecommand{\MR}{\relax\ifhmode\unskip\space\fi MR }
\providecommand{\MRhref}[2]{%
  \href{http://www.ams.org/mathscinet-getitem?mr=#1}{#2}
}
\providecommand{\href}[2]{#2}

\end{document}